\newcommand{\crit}{\mathrm{crit}}
\newcommand{\conv}{\mathrm{conv}}
\newcommand{\cl}{\mathrm{cl}\,}
\newcommand{\parents}{\mathrm{\texttt{parents}}}
\newcommand{\J}{\mathcal{J}}
\newcommand{\XX}{\mathcal{X}}
\newcommand{\YY}{\mathcal{Y}}
\newcommand{\SSS}{\mathcal{S}}
\newcommand{\HH}{\mathcal{H}}
\newcommand{\BB}{\mathcal{B}}
\newcommand{\EE}{\mathbb{E}}
\newcommand{\RR}{\mathbb{R}}
\newcommand{\NN}{\mathbb{N}}
\newcommand{\dd}{{\rm d}}
\newcommand{\partialc}{\partial^c}
\newtheorem{theorem}{Theorem}
\newtheorem{lemma}{Lemma}
\newtheorem{proposition}{Proposition}
\newtheorem{corollary}{Corollary}
\newtheorem{definition}{Definition}
\newtheorem{remark}{Remark}
\newtheorem{example}{Example}
\newenvironment{proof}[1][]{\noindent {\bf Proof #1:\;}}{\hfill $\Box$}
\providecommand{\keywords}[1]{\textbf{\textbf{Keywords. }} #1}
\newcommand{\grad}{\mathrm{grad}\,}
\newcommand{\rank}{\mathrm{rank}}
\newcommand{\R}{\mathbb{R}}
\title{Conservative set valued fields, automatic differentiation, stochastic gradient methods  and deep learning}
\begin{document}
\author{
   J\'{e}r\^{o}me Bolte\footnote{Toulouse School of Economics, Universit\'{e} Toulouse 1 Capitole, France.}   \, and Edouard Pauwels\footnote{IRIT, Universit\'e de Toulouse, CNRS. DEEL, IRT Saint Exupery. Toulouse, France.}
}

\date{\today}

\maketitle

\begin{abstract}

Modern problems in AI or in numerical analysis require nonsmooth approaches with a flexible calculus. We introduce generalized derivatives called conservative fields for which we develop a calculus and provide representation formulas. Functions having a conservative field are called path differentiable:  convex, concave,  Clarke regular and any semialgebraic Lipschitz continuous functions are path differentiable. Using Whitney stratification techniques for semialgebraic and definable sets, our model provides  variational formulas for nonsmooth automatic differentiation oracles, as for instance the famous backpropagation algorithm in deep learning. Our differential model is applied to establish the convergence in values of nonsmooth stochastic gradient methods as they are implemented in practice.

\end{abstract}

\keywords{Deep Learning, Automatic differentiation, Backpropagation algorithm, Nonsmooth stochastic optimization, Definable sets, o-minimal structures, Stochastic gradient, Clarke subdifferential, First order methods}

\newpage
\tableofcontents
\newpage
\section{Introduction}
Classical approaches to solution methods for nonsmooth equations or nonsmooth  optimization come from the calculus of variations  \cite{moreau1963fonctionnelles,rockafellar1963convex,aubin2009set,borwein2010convex,clarke1983optimization,ioffe2017,mordukhovitch2006variational,rockafellar1998Variational}. They have been successfully used in several contexts, from partial differential equations  to machine learning. But  most of the advances made in these last decades  apply to classes revolving around convex-like non differentability phenomena: convex functions, semiconvex functions or (Clarke) regular problems. On the other hand several major problems arising in machine learning, numerical analysis, or non regular dynamical systems  are not covered by these regularity models due to various calculus restrictions and the necessity of decom\-po\-sing algorithms, see e.g., \cite{bottou2018optimization} and references therein.  We propose a notion of gene\-ra\-li\-zed derivatives  and identify a class of locally Lipschitz functions, called path
 differentiable functions, for which we obtain a flexible calculus, should we accept to use weaker gene\-ra\-li\-zed derivatives than standard ones. Our starting point is extremely elementary, we see derivation as an inverse operation to integration:
\begin{equation*}
    f(y)-f(x)=\int_x^y f'(t)\dd t.
\end{equation*}
We thus introduce and study graph closed set valued mappings $D_f:\R^p\rightrightarrows \R^p$, and locally Lipschitz functions $f \colon \RR^p \to \RR$ related by
\begin{equation*}\label{intdiffNonSmooth} 
f(\gamma(1))-f(\gamma(0)) = \int_{0}^1 \langle D_f(\gamma(t)),\dot \gamma(t)\rangle \dd t, \qquad \forall \gamma \in AC([0,1],\RR^p),
\end{equation*}
where we use Aumann's integration  while $AC([0,1], \RR^p)$ is the set of absolutely continuous functions from $[0,1]$ to $\RR^p$. Rephrasing this property yields a generalized form of the zero circulation property 
\begin{equation*}
\int_{0}^1\left\langle D(\gamma(t)), \dot \gamma(t)\right\rangle \dd t=\left\{0\right\}, \qquad \forall \gamma \in AC([0,1],\RR^p), \:\gamma(0)=\gamma(1).
\end{equation*}
We naturally call these objects $D$ conservative set valued fields and a function having a conservative field is called path differentiable. Convex, concave, Clarke regular, but also  {\em any} semialgebraic Lipschitz continuous functions or Whitney stratifiable functions are path differentiable. 

We provide a calculus and several  characterizations of conservativity. First we show that conservative fields are classical gradients almost everywhere, which makes  the Clarke sub-dif\-fe\-ren\-tial a minimal convex conservative field.
Second, in the framework of semialgebraic or o-minimal structures, we provide conservative fields with a variational stratification formula \cite{bolte2007clarke}. This connection between Whitney stratification and conservativity allows to generalize known qualitative properties from the smooth world to definable conservative fields: Morse-Sard theorem, nonsmooth Kurdyka-\L ojasiewicz inequality  and convergence of solutions of differential inclusions ruled by a conservative field.
 
On a more applied side, conservative fields allow to analyze fundamental modern numerical algorithms in machine learning or numerical analysis based on automatic differentiation \cite{rumelhart1986learning,griewank2008evaluating} and decomposition \cite{bottou2018optimization,davis2018stochastic} in a nonsmooth context. Automatic differentiation is indeed proved to yield conservative fields which allows in turn to study discrete stochastic algorithms that are massively used to train AI systems. 
We illustrate this with the problem of training nonsmooth deep neural networks which are designed to perform prediction tasks based on a large labeled database  \cite{lecun2015deep}. 

Our work connects very applied concerns  with the recent theory of o-minimal structures, by revealing surprising links  between  the massively used numerical libraries (Tensorflow \cite{abadi2016tensorflow}, Pytorch, \cite{paszke2017workshop}), and   Whitney stratifications.

\paragraph{Structure of the paper}  As a conclusion, let us mention that this article contains material that can be considered as having distinct and independent interests. Researchers working in  analysis may focus on Sections 2 and 3, which present conservative fields. Those having affinity with geometry can also go through  Section 4 which provides insights into the semialgebraic and the definable cases. The more applied sections provide theorems which we believe useful to several communities: Section 5 is on nonsmooth automatic differentiation and a theoretical model for the corresponding ``oracle", while Section 6  studies stochastic gradient descent (with mini-batches) and deep learning.

\section{Conservative set valued fields}

\paragraph{Notations.}
We restrict our analysis to locally Lipschitz continuous functions in Euclidean spaces\footnote{Although all results we provide are generalizable to complete Riemannian manifolds}.  Let$\left\langle \cdot,\cdot\right\rangle$ be the canonical Euclidean scalar product on $\RR^p$ and $\|\cdot\|$ its associated norm. Take $p\in \NN$. A locally Lipschitz continuous function, $f \colon \RR^p \to \RR$ is differentiable almost everywhere by Rademacher's theorem, see for example \cite{evans2015measure}. Denote by $R \subset \RR^p$, the full measure set where $f$ is differentiable, then the Clarke subgradient of $f$ is given for any $x \in \RR^p$, by
\begin{align*}
				\partialc f(x) = \mathrm{conv} \left\{ v \in \RR^p,\, \exists  y_k \underset{k \to \infty}{\to} x \text{ with } y_k \in R,\, v_k = \nabla f(y_k) \underset{k \to \infty}{\to} v \right\}.
\end{align*}
A set valued map $D\colon \RR^p \rightrightarrows \RR^q$ is a function from $\RR^p$ to  the set of subsets of $\RR^q$. The graph of $D$ is given by
\begin{align*}
				\mathrm{graph}\,D = \left\{ (x,z):\, x \in \RR^p,\, z \in D(x)\right\}.
\end{align*}
$D$ is said to have {\em closed graph} or to be {\em graph closed} if $\mathrm{graph}\, D$ is closed as a subset of $\RR^{p + q}$. An equivalent characterization is that for any converging sequences $\left( x_k \right)_{k\in \NN}$, $\left( v_k \right)_{k\in \NN}$ in $\RR^p$, with $v_k \in D(x_k)$ for all $k \in \NN$, we have
\begin{align*}
				\lim_{k \to \infty} v_k \in D(\lim_{k\to\infty}x_k).
\end{align*}
An {\em absolutely continuous curve} is a continuous function $x \colon \RR \to \RR^p$ which admits a derivative $\dot{x}$ for Lebesgue almost all $t \in \RR$, (in which case $\dot{x}$ is Lebesgue measurable), and $x(t) - x(0)$ is the Lebesgue integral of $\dot{x}$ between $0$ and $t$ for all $t \in \RR$. Absolutely continuous curves are well suited to generalize differential equations to differential inclusions  \cite{aubin1984differential}. Given a set valued map $D\colon \RR^p \rightrightarrows \RR^p$, $x_0 \in \RR^p$, $x \colon \RR \to \RR^p$ is {\em a solution to the differential inclusion problem}
\begin{align*}
				&  \dot{x}(t) \in D(x(t)) \\
			& 	x(0) = x_0,
\end{align*}
if $x$ is an absolutely continuous curve satisfying $x(0) = x_0$ and $\dot{x}(t) \in D(x(t))$ for almost all $t$.

\subsection{Definition and vanishing circulations}
Throughout this section, we denote by $D \colon \RR^p \rightrightarrows \RR^p$ a set valued map with closed graph and nonempty compact values. The following lemma is derived from results presented in the overview textbook \cite[Section 18]{aliprantis2005infinite}.

\begin{lemma}
                Let $D \colon \RR^p \rightrightarrows \RR^p$ be a set valued map with non\-empty compact values and closed graph. Let $\gamma\colon [0,1] \to \RR^p$ be an absolutely continuous path. Then the following function
                \begin{align*}
                    t &\mapsto \max_{v \in D(\gamma(t))} \left\langle \dot{\gamma}(t), v \right\rangle, 
                \end{align*}
                defined almost everywhere on $[0,1]$, is Lebesgue measurable.
        \label{th:integrability}
\end{lemma}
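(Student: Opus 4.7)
The plan is to assemble the statement from three standard facts about measurable correspondences, as developed in \cite[Chapter 18]{aliprantis2005infinite}. Since $\gamma$ is absolutely continuous on $[0,1]$, the derivative $\dot\gamma$ exists almost everywhere and is Lebesgue measurable by definition; I work on the full measure set where $\dot\gamma$ is defined, which is enough to conclude measurability almost everywhere.

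The first and crucial step is to show that the composite correspondence $\Phi \colon [0,1] \rightrightarrows \RR^p$ given by $\Phi(t) = D(\gamma(t))$ is (weakly) measurable, in the sense that $\{t \in [0,1] : \Phi(t) \cap U \neq \emptyset\}$ is Lebesgue measurable for every open $U \subset \RR^p$. The graph of $\Phi$ is the preimage of $\mathrm{graph}\, D$ under the continuous map $(t,v) \mapsto (\gamma(t), v)$, and is therefore closed in $[0,1] \times \RR^p$. Combined with the compact-valuedness of $\Phi$ and the $\sigma$-compactness of $\RR^p$, this yields weak measurability of $\Phi$; equivalently, $\Phi$ admits a Castaing representation, i.e., a countable family $(v_n)_{n \in \NN}$ of Lebesgue measurable selections with $D(\gamma(t)) = \cl \{v_n(t) : n \in \NN\}$ for every $t \in [0,1]$.

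The conclusion then follows by writing the marginal function as a countable supremum. Since $v \mapsto \langle \dot\gamma(t), v\rangle$ is continuous, compactness of $D(\gamma(t))$ together with the Castaing representation gives
\begin{equation*}
\max_{v \in D(\gamma(t))} \langle \dot\gamma(t), v\rangle \;=\; \sup_{n \in \NN} \langle \dot\gamma(t), v_n(t)\rangle,
\end{equation*}
and each $t \mapsto \langle \dot\gamma(t), v_n(t)\rangle$ is Lebesgue measurable as an inner product of two Lebesgue measurable functions; the countable supremum of measurable functions is again measurable. Alternatively, one can invoke the measurable maximum theorem directly, noting that $(t,v) \mapsto \langle \dot\gamma(t), v\rangle$ is a Carath\'eodory integrand (linear hence continuous in $v$, measurable in $t$). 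I expect the main obstacle to be precisely the measurability of $\Phi$: closed-graphedness alone does not in general imply measurability of a correspondence, so one must exploit both the compactness of the values of $D$ and the $\sigma$-compactness of $\RR^p$ to pass from the closed-graph assumption to the measurable selection machinery used above.
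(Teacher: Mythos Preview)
Your proposal is correct and follows essentially the same route as the paper: both first show that the correspondence $t \rightrightarrows D(\gamma(t))$ is measurable from its closed graph and compact values (the paper invokes \cite[Theorem~18.20]{aliprantis2005infinite} for precisely this step), and then deduce measurability of the marginal function by standard tools from the same chapter. The only cosmetic difference is that the paper forms the Carath\'eodory integrand $(t,x)\mapsto\max_{v\in D(\gamma(t))}\langle x,v\rangle$ (Borel in $t$ via \cite[Theorem~18.19]{aliprantis2005infinite}, continuous in $x$) and then composes with the Lebesgue measurable $\dot\gamma$, whereas your primary route goes through a Castaing representation and a countable supremum; your stated alternative via the measurable maximum theorem is essentially the paper's argument with the roles of the two variables exchanged.
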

\begin{proof}
				The set valued map  $t\rightrightarrows D(\gamma(t))$ has a closed graph by continuity of $\gamma$, and nonempty compact values, hence by \cite[Theorem 18.20]{aliprantis2005infinite} it is measurable in the sense of \cite[Definition 18.1]{aliprantis2005infinite}. Using \cite[Theorem 18.19]{aliprantis2005infinite}, for each $u \in \RR^p$ the function $t \mapsto \max_{v \in D(\gamma(t))} \left\langle v, u \right\rangle$ is Borel measurable. Further, the function
				$x \mapsto  \max_{v \in D(\gamma(t))} \left\langle x,v \right\rangle$ is continuous. As a consequence, the mapping $(t,x)\mapsto \max_{v \in D(\gamma(t))} \left\langle x,v \right\rangle$ is a Carath\'eodory integrand, and hence is jointly Borel measurable \cite[Lemma 4.51]{aliprantis2005infinite}. Since $\dot{\gamma}$ is Lebesgue measurable, we deduce that
$t\mapsto \max_{v \in D(\gamma(t))} \left\langle \dot{\gamma}(t),v \right\rangle$ is Lebesgue measurable.
\end{proof}

We can now proceed with the definition of a conservative set valued field.
\begin{definition}[Conservative set valued fields]{\rm 
                Let $D \colon \RR^p \rightrightarrows \RR^p$ be a set valued map. $D$ is a {\em conservative (set valued)  field} whenever it has closed graph, nonempty compact values and for any absolutely continuous loop $\gamma \colon [0,1] \to \RR^p$, that is   $\gamma(0) = \gamma(1)$, we have
                \begin{align*}
                    \int_0^1 \max_{v \in D(\gamma(t))} \left\langle \dot{\gamma}(t), v \right\rangle \dd t = 0
                \end{align*}
                where the integral is  understood in the Lebesgue sense\footnote{which is possible thanks to Lemma  \ref{th:integrability}.}. It is equivalent to require \begin{align*}
                    \int_0^1 \min_{v \in D(\gamma(t))} \left\langle \dot{\gamma}(t), v \right\rangle \dd t = 0
                \end{align*}
                for all loops $\gamma$.
        \label{def:conservative}
        }
\end{definition}
\begin{remark}[min, max circulations and conservativity] \label{rem:min}{\rm 
The min formula is indeed obtained by using the  the reverse path $\tilde{\gamma}(t) = \gamma(1-t):$ 
\begin{align*}
    \int_0^1 \max_{v \in D(\tilde{\gamma}(t))} \left\langle \dot{\tilde{\gamma}}(t), v \right\rangle \dd t & = \int_0^1 \max_{v \in D(\gamma(1-t))} \left\langle -\dot{\gamma}(1-t), v \right\rangle \dd t\\
    &= - \int_0^1 \min_{v \in D(\gamma(1-t))} \left\langle \dot{\gamma}(1-t), v \right\rangle \dd t\\
    &=\int_1^0 \min_{v \in D(\gamma(t))} \left\langle \dot{\gamma}(t), v \right\rangle \dd t\\
    &=-\int_0^1 \min_{v \in D(\gamma(t))} \left\langle \dot{\gamma}(t), v \right\rangle \dd t = 0.
\end{align*}
We deduce that for almost all $t \in [0,1]$, $\max_{v \in D(\gamma(t))} \left\langle \dot{\gamma}(t), v \right\rangle = \min_{v \in D(\gamma(t))} \left\langle \dot{\gamma}(t), v \right\rangle$.    }          
\end{remark}
\begin{remark}[Vanishing circulation and conservativity] {\rm
    The\-re is a measurable arg\-max selection in Lemma \ref{th:integrability} (see \cite[Theorem 18.19]{aliprantis2005infinite}) so that for any measurable selection $v \colon [0,1] \to \RR^p$, $v(t) \in D(\gamma(t))$ for all $t$, we have $\int_0^1 \left\langle \dot{\gamma}(t), v(t) \right\rangle \dd t = 0$. Thus, in the setting of Definition \ref{def:conservative}, an equivalent characterization is that the Aumann integral of $t \rightrightarrows \left \langle \dot{\gamma}(t), D(\gamma(t)) \right \rangle$ is $\{0\}$. In short
    \begin{equation}\label{circul0}
 \int_0^1 \left\langle  D(\gamma(t)), \dot{\gamma}(t)\right\rangle \dd t = \{0\},  
    \end{equation}
    exactly means  that $D$ is conservative. We recover the standard definition of conservativity as fields with vanishing  circulation.}
    \label{rem:aumannIntegral}
\end{remark}

\subsection{Locally Lipschitz continuous potentials of conservative fields}
\begin{definition}[Potential functions of conservative fields]{\rm
			 Let $D \colon \RR^p \rightrightarrows \RR^p$ be a conservative field. A function $f$ defined through any of the  equivalent forms \begin{eqnarray}
			f(x)& = & f(0)+\int_0^1 \max_{v \in D(\gamma(t))} \left\langle \dot{\gamma}(t), v \right\rangle \dd t \label{pot1}\\
			     &= & f(0)+\int_0^1 \min_{v \in D(\gamma(t))} \left\langle \dot{\gamma}(t), v \right\rangle \dd t \label{pot2}\\
			     & = & f(0)+\int_0^1 \left \langle \dot{\gamma}(t), D(\gamma(t)) \right\rangle \dd t
			\label{pot3}\end{eqnarray}
			for any $\gamma$ absolutely continuous with $\gamma(0) = 0$ and $\gamma(1)=x$. $f$ is well and uniquely defined up to a constant. It is called a {\em potential function for~$D$}. We shall also say that {\em $D$ admits $f$ as a potential,}
				  or that {\em $D$ is a conservative field for~$f$}.
		\label{def:conservativeMapForF}}
\end{definition}
\begin{remark}{\rm
(a) To see that the definitions \eqref{pot1}, \eqref{pot2}  and \eqref{pot3} are indeed equivalent and independent of the chosen path, one adapts   classical ideas as follows.  Consider any $x \in \RR^p$, and any absolutely continuous paths $\gamma_1$, $\gamma_2$ such that $\gamma_1(0) = \gamma_2(0) = 0$ and $\gamma_1(1) = \gamma_2(1)=x$. We have
    \begin{align*}
        &\int_0^1 \max_{v \in D(\gamma_1(t))} \left\langle \dot{\gamma_1}(t), v \right\rangle \dd t - \int_0^1 \min_{v \in D(\gamma_2(t))} \left\langle \dot{\gamma_2}(t), v \right\rangle \dd t \\
         = &\int_0^1 \max_{v \in D(\gamma_1(t))} \left\langle \dot{\gamma_1}(t), v \right\rangle \dd t + \int_0^1 \max_{v \in D(\gamma_2(t))} -\left\langle \dot{\gamma_2}(t), v \right\rangle \dd t \\
        = &\  \int_0^{\frac{1}{2}} \max_{v \in D(\gamma_1(2t))} \left\langle 2\dot{\gamma_1}(2t), v \right\rangle \dd t + \int_{\frac{1}{2}}^1 \max_{v \in D(\gamma_2(2-2t ))} \left\langle-2 \dot{\gamma_2}(2-2t), v \right\rangle \dd t\\
        =&\ 0
    \end{align*}
    since the concatenation of $t \mapsto \gamma_1(2t)$ for $0 \leq t \leq 1/2$ and $t \mapsto \gamma_2(2-2t)$ for $1/2 \leq t \leq 1$ is an absolutely continuous loop. This shows that the value of the integral does not depend on the path. The ``minimum and maximum integrals" are thus equal and we may set for any $x \in \RR^p$:
    \begin{align*}
						f(x) = f(0) + \int_0^1 \max_{v \in D(\gamma(t))} \left\langle \dot{\gamma}(t), v \right\rangle \dd t = f(0) + \int_0^1 \min_{v \in D(\gamma(t))} \left\langle \dot{\gamma}(t), v \right\rangle \dd t
    \end{align*}
    for any $\gamma$ absolutely continuous with $\gamma(0) = 0$ and $\gamma(1)=x$. The right hand-side in \eqref{pot3} is thus a single number, and the identity is therefore well defined.\\
      (b) If $f$ is differentiable, $\nabla f$ is of course a conservative field (it is not unique).  More examples and a discussion are provided in Subsection \ref{sec:differentiability}.\\
    (c) The definition can be directly extended to star-shaped domains.\\
    (d) The potential function $f$ is  locally Lipschitz continuous. Indeed take a bounded set $S$. Take $x,y\in S$ and use (c) above with the path $[0,1]\ni t\to \gamma(t)=tx+(1-t)y$, 
    $$|f(y)-f(x)|\leq \|y-x\| \int_0^1\max_{v \in D(\gamma(t))}  \|v\|  \dd t\leq M\|x-y\|$$
   where $M$ is such that $$M\geq \max\{\|v\|: x \in \overline{\conv\, S},v \in D(x)\}$$
   with $\conv \, S$ being the convex envelope of $S$. From \cite[Lemma 3]{borwein2001generalized}, $D$ is locally bounded and such a finite constant must exist.\\
   (e) If $D_1,D_2$ are two graph closed set valued mappings with compact nonempty values, then $D_1\subset D_2$ and $D_2$ conservative implies that $D_1$ is conservative as well. \\Observe also that if $D$ is conservative $x \rightrightarrows \mathrm{conv}(D(x))$ is conservative as well.
   
  }
  \end{remark} 
  
\smallskip

Chain rule characterizes conservativity in the following sense:
	\begin{lemma}[Chain rule and conservativity]
				Let $D \colon \RR^p \rightrightarrows \RR^p$ be a locally bounded, graph closed set valued map and $f\colon \RR^p \to \RR$ a locally Lipschitz continuous function. Then $D$ is a conservative field for $f$, if and only if for any absolutely continuous curve $x \colon [0,1] \to \RR^p$, the function $t \mapsto f(x(t))$ satisfies 
				\begin{align}
								\frac{d}{d t} f(x(t)) = \left\langle v, \dot{x}(t) \right\rangle\qquad \forall v \in D_f(x(t)),
								\label{eq:chainRuleAC}
				\end{align}
				for almost all $t \in [0,1]$.
				\label{lem:chainRuleAC}
\end{lemma}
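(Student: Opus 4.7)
The plan is to prove the two implications separately, using as a preliminary fact that since $f$ is locally Lipschitz and $x$ is absolutely continuous, the composition $f\circ x$ is absolutely continuous on $[0,1]$; in particular it is differentiable almost everywhere with derivative agreeing Lebesgue-almost everywhere with the density of $f\circ x$.

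For the direction ``conservativity implies chain rule'', I assume $D$ is a conservative field with potential $f$. The path-independence consequence of Definition \ref{def:conservativeMapForF} (formulas \eqref{pot1} and \eqref{pot2}), applied to the AC path obtained by restricting an arbitrary AC curve $x\colon [0,1]\to \RR^p$ to $[0,s]$ and reparametrizing on $[0,1]$, gives
\begin{equation*}
f(x(s)) - f(x(0)) = \int_0^s \max_{v \in D(x(\tau))} \langle v, \dot x(\tau)\rangle \dd \tau = \int_0^s \min_{v \in D(x(\tau))} \langle v, \dot x(\tau)\rangle \dd \tau,
\end{equation*}
and the two integrands are in $L^1([0,1])$ by Lemma \ref{th:integrability} combined with local boundedness of $D$ and $L^1$-integrability of $\dot x$. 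Differentiating in $s$ via the Lebesgue differentiation theorem yields, for almost every $t$,
\begin{equation*}
\tfrac{d}{dt} f(x(t)) = \max_{v \in D(x(t))} \langle v, \dot x(t) \rangle = \min_{v \in D(x(t))} \langle v, \dot x(t) \rangle.
\end{equation*}
Since $v \mapsto \langle v, \dot x(t)\rangle$ is linear and $D(x(t))$ is nonempty and compact, the coincidence of its maximum and minimum forces it to be constant on $D(x(t))$, and equal to $\tfrac{d}{dt} f(x(t))$; this is precisely \eqref{eq:chainRuleAC}.

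For the converse, I assume the chain rule holds along every AC curve and pick an arbitrary AC loop $\gamma\colon [0,1]\to\RR^p$. Absolute continuity of $f\circ \gamma$ then produces
\begin{equation*}
0 = f(\gamma(1)) - f(\gamma(0)) = \int_0^1 \tfrac{d}{dt} f(\gamma(t)) \dd t,
\end{equation*}
and the chain rule hypothesis identifies the integrand, for almost every $t$, with $\langle v, \dot \gamma(t)\rangle$ for every $v \in D(\gamma(t))$; selecting a measurable maximizer $v(\cdot)$ via Lemma \ref{th:integrability} turns this into $\int_0^1 \max_{v \in D(\gamma(t))} \langle v, \dot\gamma(t)\rangle \dd t = 0$, which is Definition \ref{def:conservative}.

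The main technical hurdle is the forward direction: one must move from an a.e.\ identity of two Lebesgue integrals on every subinterval $[0,s]$ to an a.e.\ identity of their integrands, which is exactly Lebesgue differentiation, and then lift the pointwise a.e.\ equality $\max = \min = \tfrac{d}{dt}f(x(t))$ from a statement about extremizers to a statement valid for every $v \in D(x(t))$. This last lifting relies crucially on compactness of $D(x(t))$ together with linearity of $v \mapsto \langle v, \dot x(t)\rangle$, and is what produces the chain rule in the strong ``for all $v$'' form rather than only for a measurable selection.
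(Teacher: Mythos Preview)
Your proof is correct and follows essentially the same route as the paper. For the forward direction, the paper also reparametrizes $x$ on $[0,s]$ via $t\mapsto x(st)$, applies the potential formula to obtain the equality of the $\max$ and $\min$ integrals on every $[0,s]$, differentiates (the paper calls this the fundamental theorem of calculus, you call it Lebesgue differentiation), and concludes from $\max=\min$ that the linear form is constant on $D(x(s))$; for the reverse direction, the paper likewise just integrates \eqref{eq:chainRuleAC} to recover the defining equations, exactly as you do.
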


\begin{proof} 
				The reverse implication is obvious, using Lemma \ref{th:integrability}, integrating the characterization in equation \eqref{eq:chainRuleAC} we obtain any of the equivalent equations of Definition \ref{def:conservativeMapForF}.
				To prove the converse, assume now that $D$ is a conservative field for $f$. For any $0<s<1$, we have
    \begin{align*} 
          f(x(s)) - f(x(0))& 
         = \int_0^1 \max_{v \in D(x(st))} \left\langle s\dot{x}(st), v \right\rangle \dd t   \\
         &= \int_0^s \max_{v \in D(x(t))} \left\langle \dot{x}(t), v \right\rangle \dd t \\
         &=\int_0^s \min_{v \in D(x(t))} \left\langle \dot{x}(t), v \right\rangle \dd t
    \end{align*}
    The fundamental theorem of calculus states that $s \mapsto f(x(s))$ is differentiable almost everywhere and for almost all $s \in [0,1]$,
    \begin{align*}
        \frac{d}{ds} f(x(s)) = \max_{v \in D(x(s))} \left\langle \dot{x}(s), v \right\rangle = \min_{v \in D(x(s))} \left\langle \dot{x}(s), v \right\rangle = \left\langle \dot{x}(s), v \right\rangle,
    \end{align*}
    for all $v \in D(x(s))$.  
\end{proof}

\section{A generalized differential calculus}

\subsection{Conservativity,  Clarke subdifferential and gradient a.e.}

We start with the following fundamental result.

\begin{theorem}[Conservative fields are gradients almost everywhere]\label{t:Rade}
			Consider a \\con\-ser\-va\-tive field $D \colon \RR^p \rightrightarrows \RR^p$ for $f \colon \RR^p \to \RR$. Then
    $D=\{\nabla f\}$ Lebesgue almost everywhere.
    \label{lem:gradientAlmostEverywhere}
\end{theorem}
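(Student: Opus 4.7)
The strategy is to transfer the chain-rule identity of Lemma~\ref{lem:chainRuleAC} from absolutely continuous curves to ``almost every straight line,'' and then use Fubini together with Rademacher's theorem to conclude pointwise almost everywhere.

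\textbf{Step 1 (setup).} Let $R\subset\RR^p$ denote the full-measure set where $f$ is differentiable (Rademacher). For a fixed direction $u\in\RR^p$, consider the affine lines $L_{z,u}=\{z+tu:t\in\RR\}$ parametrized by $z$ in the hyperplane $H_u=u^\perp$. Each curve $t\mapsto z+tu$ is absolutely continuous, so Lemma~\ref{lem:chainRuleAC} gives: for a.e.\ $t\in\RR$ and every $v\in D(z+tu)$,
\begin{equation*}
\frac{d}{dt}f(z+tu)=\langle v,u\rangle.
\end{equation*}

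\textbf{Step 2 (Fubini reduction).} By Fubini applied to the null set $R^c$, for $\mathcal{H}^{p-1}$-a.e.\ $z\in H_u$ the one-dimensional set $\{t:z+tu\notin R\}$ is Lebesgue-null. For such $z$ and for a.e.\ $t$, the point $z+tu$ lies in $R$, hence the classical chain rule yields $\frac{d}{dt}f(z+tu)=\langle \nabla f(z+tu),u\rangle$. Combining with Step~1, for a.e.\ $t$,
\begin{equation*}
\langle v-\nabla f(z+tu),u\rangle=0\qquad\forall\,v\in D(z+tu).
\end{equation*}
Reassembling via Fubini in the other direction gives a Lebesgue-null exceptional set $N_u\subset\RR^p$ such that for every $x\notin N_u$, $f$ is differentiable at $x$ and $\langle v,u\rangle=\langle\nabla f(x),u\rangle$ for all $v\in D(x)$.

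\textbf{Step 3 (countable dense directions).} Pick a countable dense set $\{u_n\}_{n\in\NN}\subset\RR^p$ and set $N=\bigcup_n N_{u_n}$, which is still Lebesgue-null. For any $x\notin N$ and any $v\in D(x)$, one has $\langle v-\nabla f(x),u_n\rangle=0$ for every $n$; density then forces $v=\nabla f(x)$. Since $D(x)$ is nonempty by assumption, this yields $D(x)=\{\nabla f(x)\}$ for almost every $x$, as claimed.

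\textbf{Expected difficulty.} The only delicate point is the double use of Fubini in Step~2: one needs to know that on $\mathcal{H}^{p-1}$-a.e.\ line $L_{z,u}$ \emph{both} the chain-rule identity from Lemma~\ref{lem:chainRuleAC} and the pointwise differentiability of $f$ at $z+tu$ hold simultaneously on a full-measure set of $t$'s. Everything else is routine (passing to a countable dense family, invoking local boundedness of $D$ to justify the integrands).
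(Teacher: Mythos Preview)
Your argument is correct and follows the same core strategy as the paper: restrict to straight lines, use the chain rule there, combine with Rademacher, and globalize via Fubini. Two differences are worth noting. First, you invoke Lemma~\ref{lem:chainRuleAC} directly, which already gives $\frac{d}{dt}f(z+tu)=\langle v,u\rangle$ for \emph{all} $v\in D(z+tu)$ at once; the paper instead fixes a single measurable selection $a$ of $D$, shows $a=\nabla f$ a.e., and then upgrades to the full set $D$ via a Castaing representation (a dense sequence of measurable selections). Your route is shorter. Second, you use a countable dense family of directions where the paper uses the $p$ canonical basis vectors; both are fine.

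The one point you should make explicit is the measurability needed for the ``reverse'' Fubini in Step~2. Knowing that each line-slice of your bad set is null is not enough unless that set is Lebesgue measurable. The paper handles this carefully by introducing the Borel functions $f'_u(\cdot;v)$, $f'_l(\cdot;v)$ and comparing them to $\langle a(\cdot),v\rangle$. In your setup the same conclusion follows once you observe that $x\mapsto\max_{v\in D(x)}\langle v,u\rangle$ and $x\mapsto\min_{v\in D(x)}\langle v,u\rangle$ are Borel (closed graph, compact values, local boundedness; cf.\ the argument behind Lemma~\ref{th:integrability}) and that $\nabla f$ is Borel on the Rademacher set $R$. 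With that, $N_u=R^c\cup\{x\in R:\max_{v\in D(x)}\langle v,u\rangle\neq\langle\nabla f(x),u\rangle\text{ or }\min_{v\in D(x)}\langle v,u\rangle\neq\langle\nabla f(x),u\rangle\}$ is measurable, its line-slices in direction $u$ are null by your Step~1 plus Rademacher, and Fubini finishes the job.
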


\begin{proof}
				Fix a measurable selection $a \colon \RR^p \to \RR^p$ of $D$ and $f$ a potential for $D$ (which is locally Lipschitz continuous by definition). Measurable selections exist because $D$ has closed graph, with nonempty values, and hence is measurable in the sense of \cite[Definition 18.1]{aliprantis2005infinite} so that \cite[Theorem 18.13]{aliprantis2005infinite} applies (see also Lemma \ref{th:integrability}). Fix a direction $v \in \RR^p$, $x \in \RR^p$ a base point, let $s<t$ be real numbers and $\gamma$ the path $\gamma(\tau)=(1-\tau)(x+sv)+ \tau (x+tv)$, then by using  conservativity and  an elementary change of variable, we obtain for all $x \in \RR^p$ 
$$f(x+tv)-f(x+sv)=\int_s^t\langle a(x+\tau v),v\rangle d\tau.$$
    Using the fundamental theorem of calculus (in its Lebesgue form), one obtains 
		\begin{align}
						f'(y;v) =\langle a(y),v\rangle
						\label{eq:equalityOnALine}
		\end{align}
		almost everywhere on the line $x+\R v$, where  $$f'(y;v):=\lim_{r\to 0,\,r\neq 0} \frac{f(y+rv)-f(y)}{r}$$ when the limit exists. 
		Since $f$ is continuous, the following two functions defined for all $y \in \RR^p$:
\begin{align*}
				f_u'(y;v):=\lim\sup_{s\to 0,\,s\neq 0} \frac{f(y+sv)-f(y)}{s} = \lim_{k \to \infty} \sup_{0 < |s| \leq 1/k} \frac{f(y+sv)-f(y)}{s}\\
				f_l'(y;v):=\lim\inf_{s\to 0,\,s\neq 0} \frac{f(y+sv)-f(y)}{s} = \lim_{k \to \infty} \inf_{0 < |s| \leq 1/k} \frac{f(y+sv)-f(y)}{s},
\end{align*}
are Borel, hence Lebesgue measurable. Consider the following set 
\begin{align*}
				A = \left\{ y \in \RR^p, \, f_u'(y;v) \neq \langle v,a(y)\rangle \text{ or } f_l'(y;v) \neq \langle a(y), v\rangle \right\}. 
\end{align*}
This set is Lebesgue measurable and for any $y \in \RR^p \setminus A$, we have
\begin{align*}
				f'(y;v) = f_u'(y;v) = f_l'(y;v) = \langle a(y),v\rangle.
\end{align*}
Furthermore, using \eqref{eq:equalityOnALine} we have $\HH^1(A \cap (x + \RR v)) = 0$, where $\HH^1$ is the Hausdorff measure of dimension $1$. Since $x \in \RR^p$ was arbitrary, we actually have $\HH^1(A \cap L)=0$ for any line $L$, parallel to $v$ and since $A$ is measurable, Fubini's theorem entails that $A$ has zero Lebesgue measure, and hence we have $f'(y;v) =\langle v,a(y)\rangle$ for almost all $y \in \RR^p$. Now the Rademacher Theorem \cite[Theorem 3.2]{evans2015measure}, ensures that $f$ is differentiable almost everywhere, this implies that $f'(y;v) = \left\langle \nabla f (y), v\right\rangle$ for almost all $y \in \RR^p$ and hence, $\left\langle \nabla f (y), v\right\rangle = f'(y;v) = \langle a(y),v\rangle$ for almost all $y \in \RR^p$ . 

The direction $v$ was chosen arbitrarily, we repeat the same construction for each $v$ among $p$ elements of the canonical basis of $\RR^p$. Since the union of $p$ Lebesgue null sets has zero measure, we obtain that $a(y) = \nabla f(y)$ for almost all $y \in \RR^p$. 

Since $a$ was chosen as an arbitrary measurable selection for $D$, we may use \cite[Corollary 18.15]{aliprantis2005infinite} which states that there is a sequence of measurable selections for $D$, $(a_k)_{k \in \NN}$ such that for any $x \in \RR^p$, $D(x) = \mathrm{cl}\ \{a_i(x)\}_{i \in \NN}$. Using the previous  Rademacher's argument for each $i$ in $\NN$, there exists a sequence of measurable sets $(S_i)_{i \in \NN}$ which  all have full measure  and such that $a_i = \nabla f$ on $S_i$. Setting $\displaystyle S = \cap_{i \in \NN} S_i$, we have that $\R^p\setminus S$ has zero measure and 
$a_i = \nabla f$ on $S$ for all $i$ in $\NN$ and hence, using \cite[Corollary 18.15]{aliprantis2005infinite}, $D = \{\nabla f\}$ on $S$. This proves the desired result.
\end{proof}

An important consequence of the above result is that Clarke subdifferential appears as a minimal conservative field among convex valued conservative fields.
\begin{corollary}[Clarke subgradient as a minimal convex conservative field]  \label{cor:clarkeMinimalConservative}
    Let\\ $f \colon \RR^p \to \RR$  admitting a conservative field $D \colon \RR^p \rightrightarrows \RR^p$. Then $\partialc f$ is a conservative field for $f$, and for all $x \in \RR^p$,
    \begin{align*}
        \partialc f (x) \subset \mathrm{conv}(D(x)).
    \end{align*}
		\label{cor:clarkeMinimal}
\end{corollary}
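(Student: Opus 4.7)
The plan is to exploit Theorem \ref{t:Rade}, which provides a full measure set $S \subset \RR^p$ on which $D(x) = \{\nabla f(x)\}$. I must establish two things: the inclusion $\partialc f(x) \subset \conv(D(x))$, and the conservativity of $\partialc f$ for $f$. The inclusion is the substantive content; conservativity then follows by a monotonicity argument on the max/min circulations.

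For the inclusion, I would first recall that $f$ is locally Lipschitz by Remark (d), so $\partialc f$ is well defined, nonempty, convex, compact valued and graph closed (a classical property). By Rademacher's theorem, the differentiability set $R$ has full measure, and so does $R \cap S$. The key subtlety is the following standard fact about Clarke's subdifferential: if $N$ is any Lebesgue null set, then
\begin{align*}
\partialc f(x) = \conv\left\{ v \in \RR^p : \exists\, y_k \to x,\ y_k \in R \setminus N,\ \nabla f(y_k) \to v \right\},
\end{align*}
i.e.\ one may remove any null set from $R$ without altering $\partialc f$. Applying this with $N = R \setminus S$, every extreme direction $v$ generating $\partialc f(x)$ is the limit of gradients $\nabla f(y_k)$ with $y_k \in R \cap S$. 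On $R \cap S$ one has $\nabla f(y_k) \in D(y_k)$, and graph closedness of $D$ forces $v \in D(x)$. Taking convex hulls yields $\partialc f(x) \subset \conv(D(x))$.

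For conservativity of $\partialc f$, let $\gamma \colon [0,1] \to \RR^p$ be an absolutely continuous loop. From the inclusion $\partialc f(\gamma(t)) \subset \conv(D(\gamma(t)))$ and the fact that a linear functional attains the same extrema over a set and over its convex hull, we get the sandwich
\begin{align*}
\min_{v \in D(\gamma(t))}\langle \dot\gamma(t), v\rangle \;\leq\; \min_{v \in \partialc f(\gamma(t))}\langle \dot\gamma(t), v\rangle \;\leq\; \max_{v \in \partialc f(\gamma(t))}\langle \dot\gamma(t), v\rangle \;\leq\; \max_{v \in D(\gamma(t))}\langle \dot\gamma(t), v\rangle
\end{align*}
for almost every $t$. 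Since $D$ is conservative, the outer two integrals over $[0,1]$ vanish (using both the max and min formulations of Definition \ref{def:conservative}), so the middle two do as well, proving that $\partialc f$ is conservative. Lemma \ref{lem:chainRuleAC} (or direct path integration) then identifies $f$ as its potential, which is consistent because $f$ is the potential of $D$.

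The main obstacle is the null-set invariance of $\partialc f$ used in the inclusion step. Though classical, it must be invoked explicitly: without it, one cannot move the approximating gradients $\nabla f(y_k)$ from the full differentiability set $R$ onto the specific full-measure set $S$ supplied by Theorem \ref{t:Rade}, and the graph-closedness of $D$ would not apply. Everything else is packaging: checking that $\partialc f$ is graph closed with nonempty compact values (standard for locally Lipschitz $f$), and exploiting that maxima of linear functionals are unchanged under taking convex hulls.
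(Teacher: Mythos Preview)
Your proposal is correct and follows essentially the same approach as the paper: invoke Theorem~\ref{t:Rade} to obtain a full-measure set $S$ on which $D=\{\nabla f\}$, use the null-set invariance of the Clarke subdifferential (the paper cites \cite[Theorem~2.5.1]{clarke1983optimization}) together with graph closedness of $D$ to obtain $\partialc f(x)\subset\conv(D(x))$, and deduce conservativity of $\partialc f$ from this inclusion. Your sandwich argument for conservativity is simply an explicit unfolding of what the paper dispatches in one line (``follows right from the definition and the previous inclusion''), equivalently via Remark~(e).
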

\begin{proof} Let $S \subset \RR^p$ be a full measure set such that $D = \nabla f$ on $S$ (such a set exists by Theorem \ref{lem:gradientAlmostEverywhere}). Using \cite[Theorem 2.5.1]{clarke1983optimization}, we have, for any $x \in \RR^p$
    \begin{align*}
        \partialc f(x) = \mathrm{cl}\ \mathrm{conv} \left( \left\{ \lim_{k \to \infty} \nabla f(x_k),\, x_k \in S, \, x_k \underset{k \to \infty}{\to} x\right\}\right).
    \end{align*}
    Since $D$ has closed graph and $D = \nabla f$ on $S$, we have
    \begin{align*}
        \mathrm{cl}\ \mathrm{conv} \left( \left\{ \lim_{k \to \infty} \nabla f(x_k),\, x_k \in S, \, x_k \underset{k \to \infty}{\to} x\right\}\right) 
        \subset \mathrm{cl}\ \mathrm{conv} \left( D(x)\right) = \mathrm{conv} \left( D(x)\right),
    \end{align*}
    which allows to conclude. The fact that $\partialc f$ is conservative, follows right from the definition and the previous inclusion.
\end{proof}

We deduce from Corollary \ref{cor:clarkeMinimalConservative} a Fermat's rule for conservative fields.

\begin{proposition}[Fermat's rule]\label{t:fermat}
                Let $f \colon \RR^p \to \RR$ be a potential for $D \colon \RR^p \rightrightarrows \RR^p$ with non\-empty com\-pact values and closed graph. Let $x \in \RR^p$ be a local minimum or local maximum of $f$. Then $0 \in \mathrm{conv}(D(x))$.
                
\end{proposition}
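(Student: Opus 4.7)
The plan is to reduce this Fermat-type statement to the classical Fermat rule for the Clarke subdifferential, via the inclusion $\partial^c f(x)\subset \mathrm{conv}(D(x))$ established in Corollary \ref{cor:clarkeMinimal}.

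First, I would record that $f$ is locally Lipschitz continuous. This is exactly the content of point (d) of the Remark following Definition \ref{def:conservativeMapForF}: since $D$ is graph closed with nonempty compact values and is conservative with potential $f$, estimating the potential along a straight line segment between two nearby points and using local boundedness of $D$ (justified via \cite{borwein2001generalized} as in the remark) produces a Lipschitz estimate on any bounded set. This regularity is what makes the Clarke subdifferential available.

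Next, I would invoke the classical Fermat rule for locally Lipschitz continuous functions, namely that if $x \in \RR^p$ is a local minimizer or local maximizer of $f$, then
\begin{equation*}
0 \in \partial^c f(x).
\end{equation*}
The maximization case reduces to the minimization one by considering $-f$, which is also locally Lipschitz continuous, together with the fact that $\partial^c(-f)(x) = -\partial^c f(x)$.

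Finally, I would apply Corollary \ref{cor:clarkeMinimal}: since $f$ admits the conservative field $D$, we have the inclusion $\partial^c f(x) \subset \mathrm{conv}(D(x))$ at every point $x \in \RR^p$. Combining with the previous step yields $0 \in \mathrm{conv}(D(x))$, which is the desired conclusion. There is no real obstacle here; the entire content has been moved into Corollary \ref{cor:clarkeMinimal}, and the only care needed is to make sure the local Lipschitz property is in place so that the classical Fermat rule for $\partial^c f$ can be used as a black box.
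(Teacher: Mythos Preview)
Your proposal is correct and follows essentially the same route as the paper: both invoke the classical Fermat rule for the Clarke subdifferential (the paper cites \cite[Proposition 2.3.2]{clarke1983optimization}) and then apply Corollary~\ref{cor:clarkeMinimal} to pass from $0\in\partial^c f(x)$ to $0\in\mathrm{conv}(D(x))$. You simply spell out the local Lipschitz continuity and the max/min symmetry more explicitly than the paper does.
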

\begin{proof}
				This is a consequence of Corollary \ref{cor:clarkeMinimalConservative} since Fermat's rule holds for the Clarke subdifferential \cite[Proposition 2.3.2]{clarke1983optimization}.
\end{proof}

Given a fixed conservative field $D$ with $f$ as a potential, we say that $x$ is  {\em $D$-critical} for $f$ if $D(x)\ni 0.$ The value $f(x)$ is then called {\em a $D$-critical value.} This idea originates in \cite{castera2019inertial}.

\begin{remark}{\em 
    The convex envelope in Fermat's rule is necessary. For example, let $D \colon x \rightrightarrows \mathrm{sign}(x)$ with $D(0) = \{-1,1\}$, then $D$ has closed graph and is conservative for the absolute value. The origin is a global minimum of the potential but $0 \not\in D(0)$.}
\end{remark}

\subsection{Path differentiability}
\label{sec:differentiability}

Conservative fields convey a natural notion of  ``generalized differentiability", a function being differentiable if it admits a conservative field for which Definition \ref{def:conservativeMapForF} holds true. We call such functions 
path differentiable and provide a characterization in this section.
\begin{definition}[Path differentiability]{\rm
We say that  $f: \RR^p \to \RR$ is {\em path differentiable} if $f$ is the potential of a conservative field on $\RR^p$.}
\end{definition}
This implies that $f$ is locally Lipschitz continuous. We deduce from Corollary \ref{cor:clarkeMinimalConservative} the following  characterization of path differentiable functions.
\begin{corollary}[Characterization of path differentiable functions]    \label{cor:clarkeCharacterization}
				Let $f \colon \RR^p \to \RR$ be locally Lipschitz continuous, then the following assertions are equivalent\begin{itemize}
				    \item[(i)] $f$ is path differentiable
				    \item[(ii)] $\partial^cf$ is a conservative field  
				    \item[(iii)] $f$ has chain rule for the Clarke subdifferential.
				\end{itemize} 
\end{corollary}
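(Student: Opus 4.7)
The plan is to establish a cycle of implications using the two main results proved just before: Corollary~\ref{cor:clarkeMinimalConservative} gives the bridge from an arbitrary conservative field to the Clarke subdifferential, and Lemma~\ref{lem:chainRuleAC} translates conservativity into the chain rule language, provided the set-valued map at hand is locally bounded, graph closed with nonempty compact values.

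For the implication (i)~$\Rightarrow$~(ii), I would simply invoke Corollary~\ref{cor:clarkeMinimalConservative}: if $f$ is path differentiable then by definition it admits some conservative field $D$, and the corollary asserts exactly that $\partial^c f$ is then itself a conservative field for $f$. The converse (ii)~$\Rightarrow$~(i) is immediate from the definition of path differentiability, once one recalls that $\partial^c f$ of a locally Lipschitz function has nonempty compact convex values and closed graph, so it qualifies as a candidate conservative field.

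For (ii)~$\Leftrightarrow$~(iii), the strategy is to apply Lemma~\ref{lem:chainRuleAC} with $D = \partial^c f$. The hypotheses of that lemma are met: $f$ is locally Lipschitz continuous (as needed in both directions---in the (iii)~$\Rightarrow$~(ii) direction this is part of the standing assumption in the statement, and in (ii)~$\Rightarrow$~(iii) it follows from the fact that a potential of a conservative field is locally Lipschitz as observed in the remark after Definition~\ref{def:conservativeMapForF}), and $\partial^c f$ is locally bounded and graph closed with nonempty compact (convex) values by classical properties of the Clarke subdifferential \cite{clarke1983optimization}. Lemma~\ref{lem:chainRuleAC} then states precisely that $\partial^c f$ is conservative for $f$ if and only if for every absolutely continuous $x\colon[0,1]\to\RR^p$ one has $\frac{d}{dt} f(x(t)) = \langle v, \dot x(t)\rangle$ for all $v \in \partial^c f(x(t))$ almost everywhere, which is the chain rule condition (iii).

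The proof is therefore essentially a bookkeeping argument: the only mildly delicate point to check is that all regularity prerequisites on $\partial^c f$ required by Lemma~\ref{lem:chainRuleAC} are indeed automatic properties of the Clarke subdifferential of a locally Lipschitz function, so there is no real obstacle. I would close by summarising the chain (i)~$\Rightarrow$~(ii)~$\Rightarrow$~(i) and (ii)~$\Leftrightarrow$~(iii).
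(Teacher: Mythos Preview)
Your argument for (i)~$\Rightarrow$~(ii) via Corollary~\ref{cor:clarkeMinimalConservative} and for (iii)~$\Rightarrow$~(ii) via Lemma~\ref{lem:chainRuleAC} matches the paper's. The gap is in the direction (ii)~$\Rightarrow$~(i) (equivalently (ii)~$\Rightarrow$~(iii)).

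Statement (ii) asserts only that $\partial^c f$ is a conservative field in the sense of Definition~\ref{def:conservative}: closed graph, compact values, vanishing circulation. This guarantees the existence of \emph{some} potential $g$ (Definition~\ref{def:conservativeMapForF}), but it does not say $g=f$. Your claim that (ii)~$\Rightarrow$~(i) is ``immediate from the definition of path differentiability'' tacitly assumes $f$ is already this potential; likewise, when you invoke Lemma~\ref{lem:chainRuleAC} for (ii)~$\Leftrightarrow$~(iii) you identify (ii) with ``$\partial^c f$ is conservative \emph{for $f$}'', which is precisely the missing step.

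The paper closes this gap by a separate argument: conservativity of $\partial^c f$ forces, along any absolutely continuous $\gamma$, the equality
\[
\min_{v\in\partial^c f(\gamma(t))}\langle v,\dot\gamma(t)\rangle=\max_{v\in\partial^c f(\gamma(t))}\langle v,\dot\gamma(t)\rangle
\]
for almost all $t$ (Remark~\ref{rem:min}), so the affine span of $\partial^c f(\gamma(t))$ is orthogonal to $\dot\gamma(t)$. Valadier's result \cite[Proposition~3]{valadier1989entrainement} then identifies this common value with $\frac{d}{dt}f(\gamma(t))$, which is exactly the chain rule (iii), hence $f$ is the potential of $\partial^c f$ via Lemma~\ref{lem:chainRuleAC}. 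An alternative patch, available from the paper's own toolbox, would be Theorem~\ref{t:Rade}: if $\partial^c f$ is conservative with potential $g$, then $\partial^c f=\{\nabla g\}$ almost everywhere; combined with Rademacher's theorem and $\nabla f(x)\in\partial^c f(x)$ at differentiability points, one gets $\nabla f=\nabla g$ a.e., hence $f-g$ is constant.
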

\begin{proof}
				The items {\it(ii)} and {\it(iii)} are equivalent  by Lemma~\ref{lem:chainRuleAC}; we just need to prove $\mbox{\it (i)}\iff\mbox{\it{(ii)}}$. The direct implication is a consequence of Corollary \ref{cor:clarkeMinimalConservative}. For the reverse implication, we assume that $\partial^c f$ is conservative and we  show that $f$ is a potential for $\partial^c f$. Let $\gamma \colon [0,1] \to \RR^p$ be an absolutely continuous path. Using Remark \ref{rem:min}, we have for almost all $t \in [0,1]$,
				\begin{align}
								\min_{v \in \partial^c f(\gamma(t))} \left\langle v, \dot{\gamma}(t) \right\rangle = \max_{v \in \partial^c f(\gamma(t))} \left\langle v, \dot{\gamma}(t) \right\rangle
								\label{eq:minmax}
				\end{align}
				This implies that the affine span of $\partial^c f(\gamma(t))$ is orthogonal to $\dot{\gamma}(t)$ for almost all $t$. This corresponds to the definition of a ``saine function" introduced in  \cite{valadier1989entrainement}.  Proposition 3 in \cite{valadier1989entrainement} shows that the value in \eqref{eq:minmax} is equal to $\frac{d}{dt} f(\gamma(t))$ for almost all $t$. This is the equivalent characterization for a potential of $\partial^c f$ in Lemma \ref{lem:chainRuleAC}, hence $f$ is a potential for $\partial^c f$. 
\end{proof}

The following property is sometimes called integrability, it has been studied for convex functions in \cite{rockafellar1970maximal} and for broader classes in, e.g.,  \cite{correa1989tangentially,thibault1995integration,borwein1997essentially,thibault2005integrability}.
\begin{corollary}[Integrability and Clarke subdifferential]
Let $f \colon \RR^p \to \RR$ and $g \colon \RR^p \to \RR$ be two locally Lipschitz path differentiable functions such that $\partialc g(x) \subset \partialc f(x)$ for all $x \in \RR^p$, then $f-g$ is constant.
\end{corollary}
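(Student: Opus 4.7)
The plan is to use the chain rule characterization of path differentiability (Corollary \ref{cor:clarkeCharacterization} combined with Lemma \ref{lem:chainRuleAC}) in order to conclude that along any absolutely continuous curve, the derivatives of $f$ and $g$ agree almost everywhere, and then integrate.

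More precisely, since $f$ and $g$ are both path differentiable, Corollary \ref{cor:clarkeCharacterization} gives that $\partial^c f$ and $\partial^c g$ are conservative fields for $f$ and $g$ respectively, and Lemma \ref{lem:chainRuleAC} applies (the Clarke subdifferential of a locally Lipschitz function is nonempty compact valued, graph closed, and locally bounded). Fix an absolutely continuous path $\gamma \colon [0,1] \to \RR^p$. Then for almost every $t \in [0,1]$ the chain rule holds simultaneously for both functions, that is, for every $v \in \partial^c f(\gamma(t))$ we have $\frac{d}{dt} f(\gamma(t)) = \langle v, \dot\gamma(t)\rangle$, and similarly for every $w \in \partial^c g(\gamma(t))$ we have $\frac{d}{dt} g(\gamma(t)) = \langle w, \dot\gamma(t)\rangle$.

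The next step is to exploit the hypothesis. At any $t$ where the chain rule holds, pick any $v \in \partial^c g(\gamma(t))$ (the set is nonempty). By the inclusion $\partial^c g \subset \partial^c f$, this same $v$ lies in $\partial^c f(\gamma(t))$, so applying the two chain rule identities with this common $v$ yields
\begin{align*}
\frac{d}{dt}\, f(\gamma(t)) \;=\; \langle v, \dot\gamma(t)\rangle \;=\; \frac{d}{dt}\, g(\gamma(t))
\end{align*}
for almost all $t \in [0,1]$. Hence $h:=f-g$ satisfies $\frac{d}{dt} h(\gamma(t)) = 0$ a.e., and integrating gives $h(\gamma(1)) = h(\gamma(0))$.

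Finally, for any two points $x, y \in \RR^p$ one chooses $\gamma$ to be the line segment from $x$ to $y$, obtaining $h(x) = h(y)$; so $f - g$ is constant. There is no real obstacle in this argument beyond checking the hypotheses of Lemma \ref{lem:chainRuleAC}; the small ``trick" is simply to feed a common $v$ into both chain rules, which is precisely what the inclusion $\partial^c g(x) \subset \partial^c f(x)$ allows.
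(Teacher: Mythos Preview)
Your proof is correct and follows essentially the same route as the paper. The paper's argument is more terse, invoking directly the integration formula in Definition~\ref{def:conservativeMapForF} (since $\partial^c g\subset\partial^c f$, both conservative fields yield the same path integrals, hence $f$ and $g$ differ by a constant), whereas you unpack this via the equivalent chain rule characterization of Lemma~\ref{lem:chainRuleAC}; the underlying idea---feed a common element of $\partial^c g(\gamma(t))\subset\partial^c f(\gamma(t))$ into both identities---is identical.
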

\begin{proof}
    Path differentiability entails that subdifferentials are conservative fields by Co\-rol\-la\-ry \ref{cor:clarkeMinimalConservative}. The result follows by definition of potential functions through integration in Definition \ref{def:conservativeMapForF}.
\end{proof}

We briefly compare some standard subgradients notions and conservative fields. We use the vocabulary and notation of \cite{rockafellar1998Variational}.

\begin{proposition}[Some path differentiable functions]
 Let $f:\R^p\to \R$ be Lipschitz continuous, the following are sufficient conditions for $f$ to be path differentiable
 
 (i) $f$  is convex or concave.
 
 (ii) $f$ or $-f$  is Clarke regular.

 (iii) $f$ or $-f$  is prox regular.
 
 (iv) $f$ is real semialgebraic (or more generally tame, i.e.,  definable in some o-minimal structure).
    
\end{proposition}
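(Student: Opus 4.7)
The plan is to invoke Corollary~\ref{cor:clarkeCharacterization}, which reduces path differentiability to verifying that $\partial^c f$ is a conservative field, equivalently, that the Clarke chain rule holds along every absolutely continuous curve. Each of the four classes will be handled by producing such a chain rule, and essentially all will collapse into two independent arguments: the ``regularity'' family (i)--(iii) on the one hand, and the ``tame'' family (iv) on the other.

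For (i), a convex Lipschitz function is Clarke regular on $\RR^p$ (its usual directional derivative coincides with the Clarke directional derivative), and for a concave function one applies the same remark to $-f$. Thus (i) is subsumed by (ii). For (iii), a prox-regular Lipschitz function is subdifferentially regular at every point where it is prox-regular, so locally Clarke regular, and again (iii) reduces to (ii). For (ii), the strategy is: fix an absolutely continuous curve $\gamma \colon [0,1] \to \RR^p$; the function $f \circ \gamma$ is absolutely continuous (composition of Lipschitz with AC), hence differentiable a.e.; at a point $t$ of differentiability of both $\gamma$ and $f\circ\gamma$, Clarke regularity of $f$ at $\gamma(t)$ gives
\begin{equation*}
(f\circ\gamma)'(t) = f'(\gamma(t);\dot\gamma(t)) = f^\circ(\gamma(t);\dot\gamma(t)) = \max_{v\in \partial^c f(\gamma(t))} \langle v, \dot\gamma(t)\rangle,
\end{equation*}
and using the corresponding ``liminf'' side of Clarke regularity (or equivalently applying the same identity to $-\dot\gamma$ via time reversal), one also recovers the minimum. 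The common value equals $\langle v,\dot\gamma(t)\rangle$ for every $v \in \partial^c f(\gamma(t))$, which is precisely the chain rule \eqref{eq:chainRuleAC} required by Lemma~\ref{lem:chainRuleAC}.

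The hard part is (iv), and this is where the tame/stratification machinery intervenes. The idea is to use the projection formula for definable Lipschitz functions: if $\{M_i\}_i$ is a Whitney $C^1$ stratification of $\RR^p$ compatible with the graph of $f$, then for every stratum $M$ and every $x \in M$ one has $\mathrm{proj}_{T_xM}(v) = \nabla_M f(x)$ for all $v \in \partial^c f(x)$, where $\nabla_M f$ is the (smooth) Riemannian gradient of $f_{|M}$. Given an absolutely continuous $\gamma$, define for each stratum $M$ the Borel set $E_M = \{t : \gamma(t) \in M\}$. A standard argument using Whitney $(a)$-regularity, local Lipschitzness, and Lebesgue density shows that at every density point $t$ of $E_M$ that is also a point of differentiability of $\gamma$, the velocity $\dot\gamma(t)$ lies in $T_{\gamma(t)} M$; since $[0,1]$ is the disjoint union of finitely many such $E_M$ up to a negligible set (definable stratifications being locally finite), this gives $\dot\gamma(t) \in T_{\gamma(t)}M_{\gamma(t)}$ for a.e.\ $t$. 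Combining this tangential containment with the projection formula yields
\begin{equation*}
\langle v,\dot\gamma(t)\rangle = \langle \nabla_{M_{\gamma(t)}} f(\gamma(t)),\dot\gamma(t)\rangle \quad \text{for all } v \in \partial^c f(\gamma(t)),
\end{equation*}
and the right-hand side is exactly $\frac{d}{dt} f(\gamma(t))$ by the smooth chain rule on the stratum; this establishes \eqref{eq:chainRuleAC} and hence path differentiability. The main obstacle is showing $\dot\gamma(t) \in T_{\gamma(t)} M_{\gamma(t)}$ almost everywhere, which is not elementary for general AC curves: it relies genuinely on Whitney regularity and on the fact that definable stratifications are locally finite, so that the partition $\{E_M\}_M$ behaves well under Lebesgue density. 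This is precisely the content that will be fleshed out in the stratification section (Section~4) of the paper.
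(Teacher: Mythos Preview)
Your proposal is correct and follows the paper's own route: invoke Corollary~\ref{cor:clarkeCharacterization} and verify the Clarke chain rule class by class, with (i)--(iii) reducing to Clarke regularity (the paper simply cites \cite{rockafellar1998Variational}) and (iv) handled via the projection formula and stratification (the paper cites \cite{davis2018stochastic} and later reproduces that argument as Theorem~\ref{th:projImpliesChainRule}). One minor remark on your sketch of (iv): the tangency step $\dot\gamma(t)\in T_{\gamma(t)}M_{\gamma(t)}$ a.e.\ does not actually require Whitney~$(a)$-regularity or a Lebesgue-density argument; the paper obtains it more simply by observing that, for each stratum $M$, the set of $t$ with $\gamma(t)\in M$ but $\dot\gamma(t)\notin T_{\gamma(t)}M$ is countable (any such $t$ is isolated in $\{s:\gamma(s)\in M\}$, since a sequence $t_n\to t$ with $\gamma(t_n)\in M$ would force $\dot\gamma(t)$ into the tangent space via secants), while Whitney~$(a)$ enters only upstream, in the proof of the projection formula itself.
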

\begin{proof}
				Using the chain rule characterization, all proofs boil down to providing a chain rule with the Clarke subdifferential for each of the above mentioned situation. We refer to \cite{rockafellar1998Variational} for convex, Clarke and prox regular functions, \cite{davis2018stochastic} for definable functions. 
\end{proof}

In general, conservative fields may be distinct from all other classical subdifferentials, even in the definable case. Define for instance $D:\R\to\R$ by $D(0)=\{-1,0,1\}$, $D(1)=[0,2]$ and $D(x)=0$ otherwise. It is a conservative field on $\R$ with any constant function as a potential function.

\begin{remark}[Historical aspects]{\rm  Our effort to define a subclass of locally Lipschitz continuous functions which has favorable differentiability properties is one attempt among many others. The closest idea we found is due to  Valadier who introduced in 1989 the notion of \textit{``fonctions saines"} \cite{valadier1989entrainement}. Although Definition \ref{def:conservativeMapForF} looks much more general than the notion given in \cite{valadier1989entrainement}, the equivalent characterization of Corollary \ref{cor:clarkeCharacterization} shows that path-differentiable and ``saines" functions are actually the same.  
Later on, at the end of the nineties, Borwein and Moors introduced the notion of essentially smooth functions (strictly differentiable almost everywhere) as a  well-behaved subclass of locally Lipschitz continuous functions \cite{borwein1997essentially}. Interestingly, the notion of saines functions was as well reconsidered and slightly modified in \cite{borwein1998chain} to describe the larger class of arcwise essentially smooth functions.  Following \cite{valadier1989entrainement} and  Chapter 1 of \cite{wang1995pathological}, we see that, in the univariate case, saine and essentially smooth functions coincide. This is no longer true for $p\geq2$,  the set of ``fonctions saines" is a strict subset of essentially smooth functions. 
}\end{remark}

\begin{remark}[Genericity: theory and practice]
{\rm      
The work of Wang et al. \cite{wang1995pathological,borwein2001generalized} 
allows to claim that generic $1$-Lispchitz functions are not path differentiable. Paradoxi\-cally, we shall see in further sections that most functions arising in applications are path differentiable (e.g., any semialgebraic or definable  function is path differentiable)\footnote{Valadier's terminology finds here a surprising  justification, since  ``saine", healthy in English, is chosen as an antonym to ``pathological"}. }
          
\end{remark}

\subsection{Conservative mappings and their calculus}
 
In this part we often identify linear mappings to their matrices in the canonical basis. For general conservative mappings, we adopt here a definition through the chain rule rather than circulations in order to simplify the exposition. However it would be relevant to provide a direct extension of Definition \ref{def:conservative} involving vanishing circulations through set valued integration, this is matter for future work.

\begin{definition}[Conservative mappings]	\label{def:DIjacobian}{\rm 
				Let $F \colon \RR^p \to \RR^m$ be a locally Lipschitz function.  $J_F \colon \RR^p \rightrightarrows \RR^{m \times p}$ is called a {\em conservative mapping} for $F$, if for any absolutely continuous curve $\gamma \colon [0,1] \to \RR^p$, the function $t \mapsto F(\gamma(t))$ satisfies
				\begin{align*}
								\frac{d}{d t} F(\gamma(t)) = V \dot{\gamma}(t) \qquad \forall V \in J_F(\gamma(t))
				\end{align*}
				for almost all $t \in [0,1]$.}
\end{definition}
\begin{remark}[Conservative fields are conservative mappings]                \label{rem:conservativeVectorMap}{\rm  If $D \colon \RR^p \rightrightarrows \RR^p$ is a conservative field for $f \colon \RR^p \to \RR$, it is of course also a conservative mapping for~$f$.}
\end{remark}

The following lemma  provides an elementary but essential way to construct conservative matrices.
\begin{lemma}[Componentwise aggregation]	\label{lem:DIjacobian}
				Let $F \colon \RR^p \to \RR^m$ be a locally Lipschitz continuous function. Let $J_F \colon \RR^p \rightrightarrows \RR^{m \times p}$ be given by:
         				\begin{align*}
         				    J_F(x) &= 
         				    \left(
         				        \begin{tabular}{c}
         				             $v_1^T$ \\
         				            $\vdots$ \\
         				            $v_m^T$
         				        \end{tabular}
         				    \right), \qquad
         				    v_i \in D_i(x), \,i = 1\ldots, m, \quad \forall x \in \RR^p,
         				\end{align*}
                where $D_i$ is a conservative field for the $i$-th coordinate of $F$, $i = 1, \ldots, m$. Then $J_F$ is a conservative mapping for $F$.
\end{lemma}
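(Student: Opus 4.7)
The plan is to reduce the claim to a componentwise application of the chain-rule characterization of conservative fields, namely Lemma \ref{lem:chainRuleAC}. Since Definition \ref{def:DIjacobian} demands an almost-everywhere identity along absolutely continuous curves, and the $D_i$ are already assumed conservative for $F_i$, the result should drop out by assembling $m$ scalar chain rules into a single matrix identity. Observe first that $J_F$ is well-defined with nonempty values because each $D_i(x)$ is nonempty by the definition of a conservative field, and that $F$ is locally Lipschitz so $t\mapsto F(\gamma(t))$ is absolutely continuous and differentiable at almost every $t$.

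Fix an absolutely continuous curve $\gamma\colon[0,1]\to\RR^p$. For each coordinate function $F_i$, apply Lemma \ref{lem:chainRuleAC} to the conservative field $D_i$ and the potential $F_i$: there exists a Lebesgue null set $N_i\subset[0,1]$ such that for every $t\in[0,1]\setminus N_i$ the derivative $\frac{d}{dt}F_i(\gamma(t))$ exists and satisfies
\begin{align*}
\frac{d}{dt}F_i(\gamma(t)) = \langle v_i,\dot\gamma(t)\rangle, \qquad \forall\,v_i\in D_i(\gamma(t)).
\end{align*}
The key point, which I would emphasize here, is that the exceptional set $N_i$ depends only on $\gamma$ and $D_i$, not on the particular selection $v_i$: the quantifier \emph{for all} $v_i\in D_i(\gamma(t))$ is already provided by Lemma \ref{lem:chainRuleAC}.

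Set $N=\bigcup_{i=1}^m N_i$; as a finite union of null sets $N$ is itself null. For any $t\in[0,1]\setminus N$ and any $V\in J_F(\gamma(t))$, write the rows of $V$ as $v_1^T,\dots,v_m^T$ with $v_i\in D_i(\gamma(t))$. Then, coordinate by coordinate,
\begin{align*}
(V\dot\gamma(t))_i = \langle v_i,\dot\gamma(t)\rangle = \frac{d}{dt}F_i(\gamma(t)), \qquad i=1,\dots,m,
\end{align*}
which is exactly the matrix identity $V\dot\gamma(t)=\frac{d}{dt}F(\gamma(t))$ required by Definition \ref{def:DIjacobian}. Since $V$ was an arbitrary element of $J_F(\gamma(t))$ and $t\in[0,1]\setminus N$ was arbitrary, this proves that $J_F$ is a conservative mapping for $F$.

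There is no serious obstacle here: the entire argument is bookkeeping on null sets, and the only subtle point worth flagging is the uniformity of the null set with respect to the choice of $v_i\in D_i(\gamma(t))$, which is already built into the statement of Lemma \ref{lem:chainRuleAC}. If one instead had only the integral form \eqref{pot1}–\eqref{pot3} of the definition, one would need a measurable-selection argument coordinate by coordinate to exchange integration and the pointwise identification; the chain-rule lemma conveniently bypasses this step.
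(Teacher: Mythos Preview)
Your proof is correct and follows exactly the approach the paper takes: apply Lemma~\ref{lem:chainRuleAC} coordinatewise, use the product structure of $J_F$, and take the finite union of the resulting null sets. The paper compresses this into a single sentence, but your expansion is a faithful elaboration of the same argument.
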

\begin{proof}
                This follows from Lemma \ref{lem:chainRuleAC}, the product structure of $J_F$ and the fact that a finite union of Lebesgue null sets is a Lebesgue null set.
\end{proof}

A partial converse holds true (thanks to Lemma \ref{lem:chainRuleAC}): projection on rows of conservative mappings have to be conservative mappings for the corresponding coordinate function.

\begin{lemma}[Coordinates of conservative mappings]    \label{lem:projRowJacobian}
    Let $F \colon \RR^p \to \RR^m$ be locally Lipschitz continuous. Let $J_F \colon \RR^p \rightrightarrows \RR^{m \times p}$ be a conservative  mapping for $F$, then the projection of $J$ on the first row of $J$, is a conservative field for the first coordinate of~$F$.
\end{lemma}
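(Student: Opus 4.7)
The plan is to define the row projection $D_1\colon \RR^p \rightrightarrows \RR^p$ by
\[
D_1(x) \;=\; \bigl\{\, v \in \RR^p \;:\; \exists\, V \in J_F(x),\ v^T \text{ is the first row of } V \,\bigr\},
\]
and verify that it satisfies the hypotheses of Lemma \ref{lem:chainRuleAC} for the first coordinate $F_1\colon \RR^p \to \RR$.

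First, I would record the basic regularity of $D_1$. Writing $\pi_1\colon \RR^{m\times p}\to \RR^p$ for the continuous linear map sending a matrix to (the transpose of) its first row, we have $D_1 = \pi_1\circ J_F$. As the conservative mapping $J_F$ inherits (or, by analogy with conservative fields, is taken to have) nonempty compact values, a closed graph and local boundedness, these properties pass to $D_1$ through the continuous linear projection $\pi_1$: compactness is preserved by continuous images, local boundedness follows from boundedness of $\pi_1$ on bounded sets, and closedness of the graph follows from graph closedness of $J_F$ combined with local compactness of its values (any converging sequence $(x_k, v_k)\in \mathrm{graph}\, D_1$ lifts to $(x_k,V_k)$ with $V_k\in J_F(x_k)$; local boundedness lets us extract a subsequence with $V_k\to V$, and then $V\in J_F(\lim x_k)$ and $v = \pi_1(V) \in D_1(\lim x_k)$). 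Also $F_1$ is locally Lipschitz since $F$ is.

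Second, I would transfer the chain rule. By Definition \ref{def:DIjacobian}, for every absolutely continuous $\gamma\colon[0,1]\to\RR^p$ and almost every $t\in[0,1]$,
\[
\frac{d}{dt}F(\gamma(t)) \;=\; V\,\dot{\gamma}(t)\qquad \forall\, V\in J_F(\gamma(t)).
\]
Reading off the first coordinate on both sides gives, for a.e.\ $t$,
\[
\frac{d}{dt}F_1(\gamma(t)) \;=\; \langle v,\dot{\gamma}(t)\rangle \qquad \forall\, v\in D_1(\gamma(t)).
\]
This is precisely the chain rule condition \eqref{eq:chainRuleAC} from Lemma \ref{lem:chainRuleAC} applied to $D_1$ and $F_1$.

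Third, I would invoke the reverse implication of Lemma \ref{lem:chainRuleAC}: since $D_1$ is locally bounded and graph closed, $F_1$ is locally Lipschitz, and the chain rule holds along every absolutely continuous curve, $D_1$ is a conservative field for $F_1$. The main (minor) obstacle is purely the first step: confirming that graph closedness of the matrix-valued map descends to graph closedness of the row projection, which is where local boundedness of $J_F$ is genuinely used; everything afterward is a direct coordinate extraction from the defining chain rule of $J_F$.
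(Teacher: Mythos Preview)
Your proposal is correct and follows exactly the route the paper indicates: the paper does not give a standalone proof of this lemma but simply remarks beforehand that it holds ``thanks to Lemma~\ref{lem:chainRuleAC}'', and you have spelled out precisely that argument---pull the first row out of the chain rule in Definition~\ref{def:DIjacobian} and feed it into the reverse implication of Lemma~\ref{lem:chainRuleAC}. You are also right to flag that Definition~\ref{def:DIjacobian} does not explicitly impose closed graph, nonempty compact values, or local boundedness on $J_F$; these are tacitly assumed throughout (they are needed for Lemma~\ref{lem:chainRuleAC} to apply and for the row projection $D_1$ to inherit them), and your parenthetical acknowledgment of this is the honest way to handle the paper's omission.
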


Observe however, that these ``generalized Jacobians" may have a more complex structure than the product structure outlined in Lemma \ref{lem:DIjacobian}.

The following chain rule of generalized differentiation follows readily from the definition.

\begin{lemma}[The product of conservative mappings is conservative]	\label{lem:compositionJacobian}
				 Let $F_1 \colon \RR^p \to \RR^m$, $F_2 \colon \RR^m \to \RR^l$ be locally Lipschitz continuous mappings and $J_1 \colon \RR^p \rightrightarrows \RR^{m \times p}$,  $J_2 \colon \RR^p \rightrightarrows \RR^{l \times m}$ be some associated conservative mappings. Then the product mapping $J_2 J_1$ is a conservative mapping for $F_2 \circ F_1$.
\end{lemma}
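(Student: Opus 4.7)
The strategy is to apply the definition of conservative mapping twice, first to $J_1$ along the curve $\gamma$ and then to $J_2$ along the composed curve $\eta := F_1 \circ \gamma$, and then multiply the two chain rule identities. Concretely, fix an absolutely continuous $\gamma \colon [0,1] \to \RR^p$. The first step is to observe that $\eta = F_1 \circ \gamma$ is itself absolutely continuous: indeed, $\gamma([0,1])$ is compact, $F_1$ is Lipschitz on any neighborhood of that compact set, and the composition of a Lipschitz map with an absolutely continuous curve is absolutely continuous. This legitimizes testing $J_2$ along $\eta$.

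Next, using Definition \ref{def:DIjacobian} for $J_1$ along $\gamma$, there is a null set $N_1 \subset [0,1]$ such that for all $t \in [0,1]\setminus N_1$ the derivative $\dot{\eta}(t)$ exists and
\[
\dot{\eta}(t) = V_1\,\dot{\gamma}(t) \qquad \forall\, V_1 \in J_1(\gamma(t)).
\]
Applying the same definition to $J_2$ along $\eta$ yields a null set $N_2 \subset [0,1]$ such that for all $t \in [0,1]\setminus N_2$ the derivative $\frac{d}{dt} F_2(\eta(t))$ exists and
\[
\frac{d}{dt} F_2(\eta(t)) = V_2\,\dot{\eta}(t) \qquad \forall\, V_2 \in J_2(\eta(t)).
\]
Let $N = N_1 \cup N_2$, a null set. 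For every $t \in [0,1]\setminus N$ and every pair $V_1 \in J_1(\gamma(t))$, $V_2 \in J_2(F_1(\gamma(t)))$, combining the two identities gives
\[
\frac{d}{dt}\bigl(F_2\circ F_1\bigr)(\gamma(t)) = V_2 V_1 \,\dot{\gamma}(t).
\]
Since every element of $(J_2 J_1)(\gamma(t)) = J_2(F_1(\gamma(t)))\,J_1(\gamma(t))$ is by definition of such a product form, this is exactly the chain rule required by Definition~\ref{def:DIjacobian} for $J_2 J_1$ and $F_2 \circ F_1$.

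The only nontrivial technical point is the absolute continuity of $\eta$, needed to invoke the chain rule for $J_2$; everything else is a direct composition of a.e.\ identities together with the fact that a finite union of null sets is null. Note that one does not need to know that $J_2 J_1$ is graph closed or compact valued for the chain-rule version of conservativity adopted in Definition~\ref{def:DIjacobian}, so no further regularity argument is required.
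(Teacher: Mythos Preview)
Your proof is correct and follows essentially the same approach as the paper: both arguments fix an absolutely continuous curve $\gamma$, note that $F_1\circ\gamma$ is again absolutely continuous by local Lipschitz continuity, apply the chain-rule definition of conservativity first to $J_1$ along $\gamma$ and then to $J_2$ along $F_1\circ\gamma$, and combine the two a.e.\ identities. Your version is simply more explicit about the null sets and about why every element of $(J_2 J_1)(\gamma(t))$ has the required product form.
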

\begin{proof} Consider any absolutely continuous curve $\gamma \colon [0,1] \to \RR^p$. By local Lipschitz continuity, $t \mapsto F_1(\gamma(t))$ is also absolutely continuous and by definition of $J_1$ we have, 
				\begin{align*}
								\frac{d}{d t} F_1(\gamma(t)) = J_1(\gamma(t)) \dot{\gamma}(t) \qquad \mbox{a.e. on }(0,1).
				\end{align*}
				Furthermore, $F_2\circ F_1 \circ \gamma$ is also absolutely continuous by the local Lipschitz continuity of $F_2$. From the definition of $J_2$, we have
				\begin{align*}
								 \frac{d}{d t}\left( F_2 \circ F_1(\gamma(t)) \right)= J_2(F_1(\gamma(t))) \times \frac{d}{d t} \left( F_1(\gamma(t)) \right) \qquad \mbox{ a.e. on }(0,1)
				\end{align*}
				The last two identities lead to the conclusion.
\end{proof}

We deduce the following chain rule by enlargement.
\begin{lemma}[Outer chain rule]
				Let $F \colon \RR^p \to \RR^m$ and $g \colon \RR^m \to \RR$ be locally Lipschitz continuous. Let $D_F \colon \RR^p \rightrightarrows \RR^{m \times p}$ and $D_g\colon \RR^m \rightrightarrows \RR^m$ be some set valued  mappings such that $F_i$, the $i$-th coordinate of $F$, is a potential for $[D_F(\gamma(t))]_i$, the $i$-th row of $D_F$ for $i = 1, \ldots, m$ and  $g$ is a potential for $D_g$. Then $g\circ F$ is a potential of $D \colon x \rightrightarrows D_F(x)^T D_g(F(x))$.
				\label{lem:compositionSubgradient}
\end{lemma}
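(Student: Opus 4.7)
The plan is to reduce the statement to the chain-rule characterization of conservativity provided by Lemma \ref{lem:chainRuleAC}. To do so I need to (a) show that the candidate field $D(x) = D_F(x)^T D_g(F(x))$ has closed graph and nonempty compact values, and (b) establish the pointwise chain rule $\frac{d}{dt}(g\circ F\circ\gamma)(t) = \langle w, \dot\gamma(t)\rangle$ for every $w\in D(\gamma(t))$ and almost every $t$, along every absolutely continuous $\gamma\colon [0,1]\to\RR^p$.

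The first preparatory step is to upgrade the row-wise hypothesis on $D_F$ to a full conservative-mapping statement in the sense of Definition \ref{def:DIjacobian}. Given $V\in D_F(\gamma(t))$, its $i$-th row lies in $[D_F(\gamma(t))]_i$, which by assumption is conservative for $F_i$; applying Lemma \ref{lem:chainRuleAC} to each coordinate and stacking the $m$ identities gives
\begin{equation*}
\tfrac{d}{dt} F(\gamma(t)) = V\dot\gamma(t) \quad\text{a.e.}
\end{equation*}
This is the defining property of a conservative mapping for $F$, essentially the argument behind Lemma \ref{lem:DIjacobian} but applied to every matrix selection of $D_F$ at once.

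The second step is the actual chain-rule computation. Since $F$ is locally Lipschitz, $\eta := F\circ\gamma$ is absolutely continuous, so by step one $\dot\eta(t) = V\dot\gamma(t)$ a.e.\ for any $V\in D_F(\gamma(t))$. Since $g$ is a potential for the conservative field $D_g$, Lemma \ref{lem:chainRuleAC} applied along $\eta$ gives, for any $v\in D_g(\eta(t))$,
\begin{equation*}
\tfrac{d}{dt} g(F(\gamma(t))) = \langle v, \dot\eta(t)\rangle = \langle v, V\dot\gamma(t)\rangle = \langle V^{T}v, \dot\gamma(t)\rangle \quad\text{a.e.}
\end{equation*}
As $w = V^{T} v$ ranges over all elements of $D(\gamma(t))$ as $V$ and $v$ vary, this is precisely the chain rule (\ref{eq:chainRuleAC}) for $D$ and the potential candidate $g\circ F$.

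The third step closes the loop by verifying that $D$ is a bona fide conservative field, so that Lemma \ref{lem:chainRuleAC} can be invoked in the reverse direction to certify $g\circ F$ as a potential. Nonemptiness and compactness of $D(x)$ are immediate because $D(x)$ is the image of the compact set $D_F(x)\times D_g(F(x))$ under the continuous map $(V,v)\mapsto V^{T}v$. The only slightly delicate point, and the main technical obstacle, is graph closedness: taking $(x_n,w_n)\to(x,w)$ with $w_n = V_n^{T} v_n$, $V_n\in D_F(x_n)$, $v_n\in D_g(F(x_n))$, I would use local boundedness of graph-closed compact-valued maps (Remark (d) above, citing \cite{borwein2001generalized}) to extract subsequential limits $V_n\to V$ and $v_n\to v$, then use graph closedness of $D_F$ and of $D_g$ together with continuity of $F$ to conclude $V\in D_F(x)$, $v\in D_g(F(x))$, hence $w = V^{T} v\in D(x)$. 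With this in hand, the chain rule established in step two and Lemma \ref{lem:chainRuleAC} yield the claim.
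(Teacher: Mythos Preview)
Your proof is correct and follows essentially the same route as the paper: the paper's one-line proof simply cites Lemmas~\ref{lem:DIjacobian} and~\ref{lem:compositionJacobian} together with Remark~\ref{rem:conservativeVectorMap}, and your steps one and two are precisely an unpacking of those citations (first showing $D_F$ is a conservative mapping for $F$ from the row hypothesis, then applying the product rule for conservative mappings with $D_g$ viewed as a conservative mapping for $g$). Your step three, verifying closed graph and nonempty compact values of $D$, is a welcome addition that the paper's terse proof does not make explicit; it is needed to invoke Lemma~\ref{lem:chainRuleAC} in the reverse direction and hence to speak of $g\circ F$ as a \emph{potential} in the sense of Definition~\ref{def:conservativeMapForF}.
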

\begin{proof}
                This is obtained combining Lemmas \ref{lem:DIjacobian}, \ref{lem:compositionJacobian} and Remark \ref{rem:conservativeVectorMap}.
\end{proof}

   A simple consequence is a ``sum rule by enlargement" which is fundamental in the study of the mini-batch stochastic gradient:
\begin{corollary}[Outer sum rule]\label{c:sumRule}
 Let $f_1,\ldots, f_n$ be locally Lipschitz continuous functions. Then $f = \sum_{i=1}^n f_i$ is a potential for $D_f = \sum_{i=1}^n D_i$ provided that $f_i$ is a potential for each $D_i$, $i=1,\ldots, n$.
\end{corollary}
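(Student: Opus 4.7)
The plan is to deduce this directly from Lemma \ref{lem:compositionSubgradient} (the outer chain rule) by linearizing the sum. Specifically, I would introduce the vector-valued map $F \colon \RR^p \to \RR^n$ defined by $F(x) = (f_1(x), \ldots, f_n(x))^T$ and the smooth linear function $g \colon \RR^n \to \RR$, $g(y_1, \ldots, y_n) = \sum_{i=1}^n y_i$, so that $f = g \circ F$.

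Next I would specify the conservative data. For the outer function, $g$ is $C^1$ with constant gradient $(1, \ldots, 1)^T$, so $D_g \equiv \{(1,\ldots,1)^T\}$ is trivially a conservative field for $g$. For the inner mapping, the $i$-th coordinate of $F$ is $f_i$, which by hypothesis admits $D_i$ as a conservative field; hence, by the componentwise aggregation of Lemma \ref{lem:DIjacobian}, the set-valued mapping
\begin{equation*}
D_F(x) = \left\{ \begin{pmatrix} v_1^T \\ \vdots \\ v_n^T \end{pmatrix} : v_i \in D_i(x),\ i=1,\ldots,n \right\}
\end{equation*}
is a conservative mapping for $F$, with the $i$-th row of $D_F$ being a conservative field for $F_i = f_i$.

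Applying Lemma \ref{lem:compositionSubgradient} then gives that $f = g \circ F$ is a potential for the set-valued mapping $x \rightrightarrows D_F(x)^T D_g(F(x))$. It remains to identify this mapping with the Minkowski sum $\sum_{i=1}^n D_i$. Since $D_g(F(x)) = \{(1,\ldots,1)^T\}$, the matrix-vector product collapses to
\begin{equation*}
D_F(x)^T \begin{pmatrix} 1 \\ \vdots \\ 1 \end{pmatrix} = \left\{ \sum_{i=1}^n v_i : v_i \in D_i(x) \right\} = \sum_{i=1}^n D_i(x),
\end{equation*}
which is exactly $D_f(x)$.

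The proof is essentially a bookkeeping exercise; no step is genuinely hard once the chain rule is available. The only point deserving a brief verification is that the resulting $D_f$ is indeed a conservative field in the sense of Definition \ref{def:conservative}, i.e.\ it has nonempty compact values and closed graph, but this follows from standard continuity properties of the Minkowski sum of finitely many graph-closed, compact-valued maps, so nothing beyond the outer chain rule is required.
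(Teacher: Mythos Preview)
Your proposal is correct and is exactly the derivation the paper has in mind: the corollary is stated without proof as ``a simple consequence'' of Lemma~\ref{lem:compositionSubgradient}, and your choice of $F=(f_1,\ldots,f_n)$, $g(y)=\sum_i y_i$, $D_g\equiv\{(1,\ldots,1)^T\}$ together with the componentwise $D_F$ from Lemma~\ref{lem:DIjacobian} is precisely how that lemma specializes to yield the Minkowski sum $\sum_i D_i$.
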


\section{Tameness and conservativity}\label{s:tame}
Let us beforehand provide two useful reading keys:
\begin{itemize}
    \item[---] The reader unfamiliar with definable  objects can simply replace definability  assumptions by semialgebraicity assumptions. It is indeed enough to treat major   applications considered here, as for example deep learning with ReLU activation functions and square loss.
 \item[---] Semialgebraicity and definability being easy to recognize in practice, the results in this section can be readily used as ``black boxes" for applicative purposes. 
\end{itemize}

\subsection{Introduction and definition}

We recall here the  results of  geometry  that we use in the present work.
Some references on this topic are \cite{Cos99,dries1996geometric}.

  An {\em o-minimal structure} on $(\R,+,\cdot)$ is a collection of sets
  $\mathcal{O} = (\mathcal{O}_p)_{p \in \NN}$ where each $\mathcal{O}_p$ is itself a family of
  subsets of $\R^p$, such that for each $p \in \NN$:
  \begin{enumerate}
  \item[(i)] $\mathcal{O}_p$ is stable by complementation, finite union, finite intersection and contains $\R^p$.
    \item[(ii)]  if $A$ belongs to $\mathcal{O}_p$, then $A \times \R$ and $\R \times A$
      belong to $\mathcal{O}_{p+1}$;
    \item[(iii)]  if $\pi: \R^{p+1} \to \R^p$ is the canonical projection onto $\R^p$ then,
      for any $A \in \mathcal{O}_{p+1}$, the set $\pi(A)$ belongs to $\mathcal{O}_p$;
      \label{it:algebraic}
    \item[(iv)]  $\mathcal{O}_p$ contains the family of real algebraic subsets of $\R^p$, that is,
      every set of the form
      \[
        \{ x \in \R^p \mid g(x) = 0 \}
      \]
      where $g: \R^p \to \R$ is a polynomial function;
    \item[(v)]  the elements of $\mathcal{O}_1$ are exactly the finite unions of  intervals.
  \end{enumerate}

A subset of $\R^p$ which belongs to an o-minimal structure $\mathcal{O}$ is said to be
{\em definable in $\mathcal{O}$}. The terminology {\em tame} refers to definability in an o-minimal structure without specifying which structure. Very often the o-minimal structure is fixed, so one simply says {\em definable}. We will stick to this terminology all our results are valid in a fixed o-minimal structure.
A set valued mapping (or a function) is said to be definable in $\mathcal{O}$ whenever its graph is definable in $\mathcal{O}$.

  The simplest  o-minimal structure is given by the class of
  real  semialgebraic objects.
  Recall that a set $A \subset \R^p$ is called {\em semialgebraic} if it is a finite union of sets of the form $$\displaystyle  \bigcap_{i=1}^k \{x \in \R^p \mid g_{i}(x) < 0, \; h_{i}(x) = 0 \}$$
  where the functions $g_{i}, h_{i}: \R^p \to \R$ are real polynomial functions and $k\geq 1$.
  The key tool to show that these sets form an o-minimal structure is Tarski-Seidenberg
  principle.

	O-minimality is an extremely rich topological concept: major structures, such as globally subanalytic sets or sets belonging to the $\log$-$\exp$
  structure provides vast applicative opportunities (as deep learning with hyperbolic activation functions or entropic losses, see \cite{davis2018stochastic,castera2019inertial} for some illustrations).
  We will not give proper definitions of these structures in this paper, but the interested
  reader may consult \cite{dries1996geometric}.

	The tangent space at a point $x$ of a manifold $M$ is denoted by $T_xM$. Given a submanifold\footnote{We only consider embedded submanifolds} $M$ of a finite dimensional Riemannian manifold, it is endowed by the Riemanninan structure inherited from the ambient space. Given $f \colon \RR^p \to \RR$ and $M\subset\R^p$ a differentiable submanifold  on which $f$ is differentiable, we denote by $\mbox{grad}_M f$ its Riemannian gradient or even, when no confusion is possible,  $\grad f$.

 A $C^r$ stratification of a (sub)manifold $M$ (of $\R^p$) is a partition $\SSS=(M_1,\ldots,M_m)$ of $M$ into $C^r$ manifolds having the property that $\cl M_i\cap M_j\neq \emptyset$ implies that $M_j$ is entirely contained in the boundary of $M_i$ whenever $i\neq j$. Assume that a function $f:M \to \R$ is given and that $M$ is stratified into manifolds on which $f$ is differentiable. For $x$ in $M$, we denote by $M_x$ the strata containing $x$ and we simply write $\grad f(x)$ for the gradient of $f$ with respect to $M_x$.

Stratifications can have many properties, we refer to \cite{dries1996geometric} and references therein for an account on this question and in particular for more on the idea  of a Whitney stratification that we will use repeatedly. 
We pertain here to one basic definition: a $C^{r}$-stratification $\SSS=(M_{i})_{i\in I}$ of a manifold $M$ has the
\emph{Whitney-}($a$)\emph{ property,} if for each $x\in\cl M_{i}\cap
M_{j}$ (with $i\neq j$) and for each sequence $(x_{k})_{k\in\NN}\subset
M_{i}$ we have:
\[
\left.
\begin{array}
[c]{ll}
& \underset{k\rightarrow\infty}{\lim}\mathcal{\;}x_{k}\mathcal{\;}=x\\
& \text{}  \\
&
\underset{k\rightarrow\infty}{\lim}\mathcal{\;}T_{x_{k}}M_{i}\mathcal{\;}
=\mathcal{T}
\end{array}
\right\}  \mathcal{\;}\Longrightarrow\mathcal{\;}T_{x}M_{j}\mathcal{\;}
\subset\mathcal{\;T}
\]
where the second limit is to be understood in the Grassmanian, i.e., ``directional", sense. In the
sequel we shall use the term \emph{Whitney stratification} to refer to a
$C^{1}$-stratification with the Whitney-($a$) property.

\subsection{Variational stratification and projection formulas}

Let us fix an o-minimal structure $\cal O$, so that a set or a function will be called definable if it  is definable in $\cal O$.
\begin{definition}[Variational stratification]{\rm 
				Let $f \colon \RR^p \to \RR$, be locally Lipschitz con\-ti\-nuous, let $D \colon \RR^p \rightrightarrows \RR^p$ be a set valued map and let $r\geq 1$. We say that the couple $(f,D)$ has a $C^r$ {\em variational stratification} if there exists a $C^r$ Whitney stratification $\SSS = (M_i)_{i \in I}$ of $\RR^p$, such that $f$ is $C^r$ on each stratum  and for all $x \in \R^p$,
				\begin{align}\label{projf}
								\mathrm{Proj}_{T_{M_x}(x)} D(x) = \left\{ \grad f(x) \right\},
				\end{align}				
				 where $\grad f(x)$ is the gradient of $f$ restricted to the active strata $M_x$ containing $x$.	}	\label{def:projectionFormula}
\end{definition}

The equations \eqref{projf} are called \emph{projection formulas} and are motivated by Corollary 9 in \cite{bolte2007clarke} which  states that Clarke subgradients of definable functions have projection formulas.
\begin{theorem}[Projection formula \cite{bolte2007clarke}] 
				Let $f \colon \RR^ p \to \RR$ be  definable, locally Lipschitz continuous\footnote{In \cite{bolte2007clarke} the authors assume $f$ to be arbitrary and obtain similar result, for simplicity we pertain to the Lipschitz case.}, and let $r \in \NN$. Then there exists a finite $C^r$ Whitney stratification $\SSS = (M_i)_{i \in I}$ of $\RR^p$ such that for all $x \in \RR^p$,
				\begin{align*}
								\mathrm{Proj}_{T_x(M_x)} \partialc f(x) = \left\{ \grad f(x) \right\}.
				\end{align*}
		In other words, the couple $(f,\partialc f)$ has a a $C^r$ variational stratification.
\label{th:stratification}
\end{theorem}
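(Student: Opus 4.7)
The plan is to first produce the Whitney stratification from standard o-minimal theory and then verify the projection formula stratum by stratum, using the Whitney~(a) condition as the key geometric tool.

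\emph{Step 1: Stratification.} I would invoke the $C^r$ cell decomposition theorem valid in any o-minimal structure: the definable function $f$ admits a finite partition of $\RR^p$ into definable $C^r$ submanifolds on each of which $f$ restricts to a $C^r$ function. Refining this partition by the classical Whitney stratification theorem for definable sets produces a finite $C^r$ Whitney~(a) stratification $\SSS = (M_i)_{i\in I}$ of $\RR^p$ on whose strata $f$ is still $C^r$.

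\emph{Step 2: Reduction to one sequence.} Fix $x$ in its stratum $M_x$ and $\xi \in \partialc f(x)$. By the representation $\partialc f(x) = \cl \conv\{\lim_k \nabla f(y_k) : y_k \to x,\ y_k \in R\}$ together with the linearity and continuity of the orthogonal projection $\mathrm{Proj}_{T_x M_x}$, it is enough to establish the formula for a single generating limit $\xi = \lim_k \nabla f(y_k)$. Finiteness of $\SSS$ allows extracting a subsequence with all $y_k$ lying in one stratum $M'$ whose closure contains $M_x$; a further extraction makes the tangent spaces $T_{y_k} M'$ converge in the Grassmannian to a linear subspace $\mathcal{T}$, and Whitney~(a) yields the crucial inclusion $T_x M_x \subset \mathcal{T}$.

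\emph{Step 3: The tangent-projection identity.} Given any $v \in T_x M_x$, the inclusion $T_x M_x \subset \mathcal{T}$ lets me pick vectors $v_k \in T_{y_k} M'$ with $v_k \to v$ and build definable $C^1$ curves $\beta_k : [0,\varepsilon_k) \to M'$ with $\beta_k(0)=y_k$ and $\dot\beta_k(0) = v_k$, converging in $C^1$ to a curve $\alpha : [0,\varepsilon) \to M_x$ with $\alpha(0)=x$, $\dot\alpha(0)=v$. The stratum chain rule applied on $M'$ and on $M_x$ gives
\[
\left.\tfrac{d}{dt} f(\beta_k(t))\right|_{t=0} = \langle \mathrm{Proj}_{T_{y_k} M'} \nabla f(y_k),\, v_k \rangle \quad\text{and}\quad \left.\tfrac{d}{dt} f(\alpha(t))\right|_{t=0} = \langle \grad f(x),\, v \rangle,
\]
where $\grad f(x)$ denotes the Riemannian gradient of $f$ on $M_x$. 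Passing to the limit in $k$, using continuity of $f$ across $\overline{M'} \supset M_x$ together with the $C^1$ convergence of the curves, equates these two quantities. Since $\mathrm{Proj}_{T_{y_k} M'} \nabla f(y_k) \to \mathrm{Proj}_{\mathcal{T}} \xi$ and $v \in T_x M_x \subset \mathcal{T}$ forces $\langle \mathrm{Proj}_{\mathcal{T}} \xi, v\rangle = \langle \xi, v\rangle$, one obtains $\langle \xi, v\rangle = \langle \grad f(x), v\rangle$ for every $v \in T_x M_x$, which is precisely the projection formula.

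\emph{Main obstacle.} The delicate step is Step~3: constructing the approximating curves $\beta_k$ in the adjacent stratum whose tangent vectors converge to $v \in T_x M_x$, and legitimating the chain-rule passage to the limit. This is exactly where Whitney~(a) combined with definability (which provides rigidity and curve-selection-type tools) is indispensable; without such control on the way $M'$ approaches $M_x$, no identification of the stratum-tangential projection of ambient gradient limits with the intrinsic Riemannian gradient would be available.
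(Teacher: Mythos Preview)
The paper does not prove this theorem; it is quoted from \cite{bolte2007clarke} (Corollary~9 there) without argument. So the comparison is with the original Bolte--Daniilidis--Lewis--Shiota proof, not with anything in the present paper.

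Your outline has a genuine gap, precisely at the point you yourself flag as the ``main obstacle''. In Step~3 you assert that
\[
\left.\frac{d}{dt} f(\beta_k(t))\right|_{t=0}\ \longrightarrow\ \left.\frac{d}{dt} f(\alpha(t))\right|_{t=0}
\]
follows from ``continuity of $f$ across $\overline{M'}$ together with the $C^1$ convergence of the curves''. It does not. Continuity of $f$ and $C^1$ convergence $\beta_k\to\alpha$ give only uniform convergence of $f\circ\beta_k$ to $f\circ\alpha$, and uniform convergence of Lipschitz functions does not force convergence of derivatives at a point. Concretely, the difference quotients satisfy
\[
\left|\frac{f(\beta_k(t))-f(\beta_k(0))}{t}-\frac{f(\alpha(t))-f(\alpha(0))}{t}\right|\ \le\ \frac{L\,|y_k-x|}{t}+L\sup_s|\dot\beta_k(s)-\dot\alpha(s)|+\frac{L\,|\beta_k(t)-\alpha(t)|}{t},
\]
and the term $L|y_k-x|/t$ cannot be made small uniformly in $t$, so the interchange of limits $\lim_k\lim_{t\to0}=\lim_{t\to0}\lim_k$ is unjustified. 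Since the conclusion you draw from this limit, namely $\langle\xi,v\rangle=\langle\grad f(x),v\rangle$, is exactly the statement to be proved, the argument is circular at this point. Whitney~(a) on the \emph{domain} stratification gives you control of tangent spaces $T_{y_k}M'\to\mathcal T\supset T_xM_x$, but says nothing about how the \emph{values} of $\nabla f$ behave across strata; that is the missing ingredient.

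The proof in \cite{bolte2007clarke} avoids this difficulty by working with a Whitney stratification of the \emph{graph} of $f$ in $\RR^{p+1}$ rather than of the domain alone. The Clarke subdifferential is encoded in the normal cone to the epigraph/graph at $(x,f(x))$; Whitney~(a) applied to the graph strata then directly constrains these normals, and projecting down yields the identity $\mathrm{Proj}_{T_xM_x}\partialc f(x)=\{\grad f(x)\}$. The point is that stratifying the graph couples the geometry of the strata to the first-order behaviour of $f$, which your domain-only stratification does not.
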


\subsection{Characterization of definable conservative fields}

The following is a  direct extension of the chain rule result  given in \cite[Theorem 5.8]{davis2018stochastic}. It relies also on   Theorem~\ref{th:stratification} and implies that a definable function $f$ is a potential of its Clarke subgradient.
\begin{theorem}[Integrability ({\rm from \cite{davis2018stochastic}})] Let $f \colon \RR^p \to \RR $, be locally Lispchitz continuous and let $D \colon \RR^p \rightrightarrows \RR^p$ be compact valued and graph closed, such that $(f,D)$ has a $C^1$ variational stratification. Then $f$ is a potential of $D$.
				\label{th:projImpliesChainRule}
\end{theorem}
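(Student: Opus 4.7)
The plan is to verify Definition \ref{def:conservativeMapForF} directly: for any absolutely continuous curve $\gamma\colon [0,1]\to\RR^p$ and any $v\in D(\gamma(t))$, establish the pointwise chain rule
$$\frac{d}{dt}(f\circ\gamma)(t)=\langle v,\dot\gamma(t)\rangle\quad\text{for a.e. }t\in[0,1],$$
and then integrate. Note that $f\circ\gamma$ is absolutely continuous (since $f$ is locally Lipschitz), and, by Lemma \ref{th:integrability}, the max/min integrand is measurable; the Lipschitz constant of $f$ on a bounded set containing $\gamma([0,1])$ bounds $|\langle v,\dot\gamma(t)\rangle|$ by $L\|\dot\gamma(t)\|$ once the chain rule is known on a stratum, giving integrability without needing global local boundedness of $D$.

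Let $\SSS=(M_i)_{i\in I}$ be the $C^1$ Whitney stratification provided by the variational stratification hypothesis, and decompose $[0,1]=\bigsqcup_i A_i$ with $A_i=\gamma^{-1}(M_i)$. By the Lebesgue density theorem and Rademacher/AC differentiability, for a.e. $t_0\in[0,1]$ one can arrange simultaneously that (a) $t_0$ is a density point of the unique $A_i$ containing it, (b) $\dot\gamma(t_0)$ exists, and (c) $(f\circ\gamma)'(t_0)$ exists. The central step is the \emph{tangency lemma}: at every such $t_0\in A_i$, $\dot\gamma(t_0)\in T_{\gamma(t_0)}M_i$. I would prove this by selecting a $C^1$ neighborhood retraction $r\colon U\to U\cap M_i$ around $\gamma(t_0)$ (available since $M_i$ is a $C^1$ embedded submanifold and the stratification is locally finite near $\gamma(t_0)$). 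Pick a sequence $t_k\to t_0$ with $t_k\in A_i$; since $r(\gamma(t_k))=\gamma(t_k)$ and $r$ is $C^1$, the first-order expansion
$$r(\gamma(t_k))-r(\gamma(t_0))=Dr(\gamma(t_0))(\gamma(t_k)-\gamma(t_0))+o(\|\gamma(t_k)-\gamma(t_0)\|)$$
divided by $t_k-t_0$ yields $\dot\gamma(t_0)=Dr(\gamma(t_0))\dot\gamma(t_0)$, and $Dr(\gamma(t_0))$ is the orthogonal projection onto $T_{\gamma(t_0)}M_i$.

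Once tangency is secured, I would conclude as follows. Set $\tilde f=f\circ r$ on the retraction neighborhood; this is a $C^1$ extension of $f|_{M_i}$. Both $f\circ\gamma$ and $\tilde f\circ\gamma$ are absolutely continuous and coincide on the density-one set $A_i\cap(t_0-\varepsilon,t_0+\varepsilon)$, hence their derivatives at the density point $t_0$ agree. The classical $C^1$ chain rule then gives
$$(f\circ\gamma)'(t_0)=\langle\nabla\tilde f(\gamma(t_0)),\dot\gamma(t_0)\rangle=\langle\grad_{M_i}f(\gamma(t_0)),\dot\gamma(t_0)\rangle,$$
the last equality using tangency and the fact that $\nabla\tilde f$ projects onto $\grad_{M_i}f$ along $M_i$. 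The projection formula \eqref{projf} applied to any $v\in D(\gamma(t_0))$ yields $\mathrm{Proj}_{T_{\gamma(t_0)}M_i}v=\grad_{M_i}f(\gamma(t_0))$, and combined with tangency this gives $\langle v,\dot\gamma(t_0)\rangle=\langle\grad_{M_i}f(\gamma(t_0)),\dot\gamma(t_0)\rangle=(f\circ\gamma)'(t_0)$, which is the desired pointwise chain rule. Integrating from $0$ to $1$ yields the three equivalent defining identities \eqref{pot1}--\eqref{pot3} of a potential, so $f$ is a potential for $D$.

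The main obstacle is the tangency step: $\gamma$ is only absolutely continuous and $A_i$ can be a complicated measurable set, so one cannot naively talk about $\dot\gamma$ ``inside the stratum''. The retraction/density-point argument circumvents this, but it implicitly uses local finiteness of the stratification near each $\gamma(t_0)$, which is a standard property of Whitney stratifications and should be invoked explicitly. A secondary technical point is measurability of $A_i=\gamma^{-1}(M_i)$, which follows because strata in a Whitney stratification are locally closed (hence Borel) and $\gamma$ is continuous.
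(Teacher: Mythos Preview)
Your proof is correct and reaches the same conclusion as the paper, but by a somewhat different route. The paper argues that for each stratum $M$, the set $\Omega_M=\{t:\gamma(t)\in M,\ \dot\gamma(t)\notin T_{\gamma(t)}M\}$ consists of points that are isolated in $\gamma^{-1}(M)$ (otherwise a sequence $t_k\in\gamma^{-1}(M)$, $t_k\to t_0$, would force $\dot\gamma(t_0)$ into the tangent space), hence $\Omega_M$ is countable; the chain rule at non-isolated points then follows by computing the difference quotient of $f\circ\gamma$ along that same sequence. You instead invoke the Lebesgue density theorem and a $C^1$ tubular retraction to obtain tangency at density points, and use the $C^1$ extension $\tilde f=f\circ r$ to transfer the smooth chain rule back to $f\circ\gamma$. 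Your argument is more explicit about why $(f\circ\gamma)'(t_0)=\langle\grad_{M_i}f(\gamma(t_0)),\dot\gamma(t_0)\rangle$ holds even though $\gamma$ may leave $M_i$ (a step the paper leaves implicit), while the paper's isolation argument is lighter on machinery and yields the slightly stronger conclusion that the exceptional set is countable rather than merely null. One small inaccuracy that does not affect your argument: for a generic $C^1$ retraction $r$, the differential $Dr(\gamma(t_0))$ need not be the \emph{orthogonal} projection onto $T_{\gamma(t_0)}M_i$; all you actually use is that its range lies in $T_{\gamma(t_0)}M_i$, which is true for any retraction.
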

\begin{proof}
    The proof given in \cite{davis2018stochastic} was proposed for the Clarke subdifferential but holds for larger classes of stratifiable set valued map. We reproduce the arguments here for clarity.
    
    Let $\SSS$ be a stratification provided by the variational stratification. Fix an absolutely continuous path $\gamma \colon [0,1] \to \RR^p$. Fix an arbitrary $t \in (0,1)$ and $M \in \SSS$ such that 
    \begin{align}
        \gamma(t) \in M,\quad \dot{\gamma}(t) \in T_M(\gamma(t)) 
        \label{eq:conditionTangentStrat}
    \end{align}
    In this case, the projection formula ensures that for any $v \in D(\gamma(t))$
    \begin{align*}
        \left\langle\dot{\gamma}(t),v \right\rangle = \left\langle\dot{\gamma}(t), \grad f(\gamma(t)) \right\rangle = \frac{d}{d t} f(\gamma(t)).
    \end{align*}
    Set $\Omega_M = \{t \in [0,1],\, \gamma(t) \in M,\quad \dot{\gamma}(t) \not\in T_M(\gamma(t)) \}$. Fix any $t_0 \in \Omega_M$, there exists a small closed interval $I$ centered at $t_0$ such that $I \cap \Omega_M = \{t_0\}$, otherwise one would have $ \dot{\gamma}(t_0) \in T_M(\gamma(t_0))$. The interval $I$ may be chosen with rational endpoints and this gives an injection from $\Omega_M$ to $\mathbb{Q}^2$. Hence $\Omega_M$ is countable. Since $\SSS$ contains only finitely many strata, for almost all $t \in [0,1]$, relation \eqref{eq:conditionTangentStrat} holds for some other strata and the result follows using the chain rule characterization of conservativity in Lemma \ref{lem:chainRuleAC}.
\end{proof}

We aim at proving the following converse in the context of tame analysis.
\begin{theorem}[Variational stratification for definable conservative fields]\label{t:var}
				Let $D \colon \RR^p \rightrightarrows \RR^p$ be a definable conservative field having a definable potential denoted by $f\colon \RR^p \to \RR$. Then $(f,D)$ has a $C^r$ variational stratification. In other words there exists a stratification $\{M_i\}_{i\in I}$ of $\R^p$ such that 
				$$P_{T_xM_x}D(x)=\{\grad f(x)\}$$
				whenever $M_x$ is the active strata.
						
				\label{prop:projImpliesChainRule}
\end{theorem}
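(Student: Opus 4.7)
The strategy is to build a $C^r$ Whitney stratification of $\RR^p$ that is simultaneously adapted to $f$ and to $\mathrm{graph}\, D$, and then to deduce the projection formula pointwise by applying the chain rule characterization (Lemma \ref{lem:chainRuleAC}) along smooth curves inside strata together with a smooth local selection of $D$ supplied by the implicit function theorem.

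Using the standard $C^r$ Whitney stratification theorems for definable sets (as invoked in the proof of Theorem \ref{th:stratification} and in \cite{dries1996geometric}), I would produce a finite $C^r$ Whitney stratification $\SSS = (M_i)_{i\in I}$ of $\RR^p$ and a $C^r$ Whitney stratification $\mathcal{T} = (N_j)_{j\in J}$ of $\mathrm{graph}\, D \subset \RR^p \times \RR^p$ with the following compatibility properties: $f$ is $C^r$ on each $M_i$, and for each $j$ the first projection $\pi_1 \colon N_j \to \RR^p$ has image contained in a single stratum $M_{i(j)}$ and is a $C^r$ submersion onto it. The first property is exactly Theorem \ref{th:stratification} applied to $f$. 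The second is obtained by starting with a Whitney stratification of $\mathrm{graph}\, D$ compatible with $\SSS \times \RR^p$ and refining it along the definable locus of constant rank of $d\pi_1$, discarding lower-rank components by further splitting, a routine o-minimal construction.

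Once this stratification is in place, fix $M \in \SSS$, a point $x \in M$, and any $v \in D(x)$. Let $N \in \mathcal{T}$ be the stratum containing $(x,v)$. Since $\pi_1|_N$ is a $C^r$ submersion onto $M$ near $x$, the implicit function theorem yields a $C^r$ section $\sigma \colon U \to \RR^p$ on a neighborhood $U$ of $x$ in $M$ with $\sigma(x) = v$ and $\sigma(y) \in D(y)$ for all $y \in U$. Given a direction $u \in T_{M_x}(x)$, pick a $C^r$ curve $\gamma \colon (-\varepsilon,\varepsilon) \to U$ with $\gamma(0) = x$ and $\dot{\gamma}(0) = u$. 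Lemma \ref{lem:chainRuleAC} applied to $\gamma$ and to the selection $t \mapsto \sigma(\gamma(t)) \in D(\gamma(t))$ gives
$$\frac{d}{dt} f(\gamma(t)) = \langle \sigma(\gamma(t)), \dot\gamma(t)\rangle \quad \text{for a.e. } t.$$
Both sides are continuous in $t$ (the left because $f\circ\gamma$ is $C^r$ on $M$, the right because $\sigma$ and $\dot\gamma$ are continuous), so the equality in fact holds for every $t$, and at $t=0$ it reads $\langle \grad f(x), u\rangle = \langle v, u\rangle$. As $u$ ranges over $T_{M_x}(x)$, we obtain $v - \grad f(x) \perp T_{M_x}(x)$, and since $\grad f(x) \in T_{M_x}(x)$ this gives $\mathrm{Proj}_{T_{M_x}(x)} v = \grad f(x)$. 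The arbitrariness of $v \in D(x)$ yields the required projection formula.

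The main obstacle is the joint stratification step: one must simultaneously stratify $\RR^p$ (compatibly with the smoothness of $f$ and with the Whitney-(a) property) and $\mathrm{graph}\, D$ (so that the projection on each graph-stratum is a $C^r$ submersion onto a base stratum), with each graph-stratum mapping into a unique base stratum. This is standard in the o-minimal toolbox, combining cell decomposition, rank stratification of a definable map, and compatibility with prescribed definable partitions, but it is the only delicate point: once the stratification is secured, the passage from the a.e. chain rule to a pointwise identity through a smooth selection and a smooth test curve is transparent.
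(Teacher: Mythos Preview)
Your argument is correct but follows a genuinely different route from the paper's. The paper does not stratify $\mathrm{graph}\, D$ at all: it takes a constant-rank Whitney stratification of $f$, and on each stratum $M$ shows that the ``bad'' set $R=\{x\in M:\ P_{T_xM}D(x)\neq\{\grad f(x)\}\}$ has dimension strictly below $\dim M$, then recurses on $R$ until exhaustion. The rank-$0$ case (where $f$ is constant on $M$) is handled by contradiction: a definable selection of $\arg\max\{\|v\|:v\in P_{T_xM}D(x)\}$ is stratified to be smooth on an open piece, and integrating the resulting vector field produces a curve along which $f$ strictly increases, contradicting constancy; the rank-$1$ case is reduced to rank $0$ by replacing $D$ with $D-\grad f$. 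Your approach trades this dimension-descent for a heavier stratification input (a Thom-type stratification of the projection $\pi_1\colon\mathrm{graph}\,D\to\RR^p$ so that each graph-stratum submerses onto a base stratum), after which the implicit function theorem gives a smooth local selection and the a.e.\ chain rule upgrades to a pointwise identity by continuity. Your route is cleaner once the map-stratification is granted and yields the projection formula directly at every point without any recursion; the paper's route is more self-contained (it only needs stratification of $f$ and definable choice, not stratification of a map) and illustrates explicitly how conservativity forces the tangential component of $D$ to vanish modulo $\grad f$. The only point you should state more carefully is the existence of the joint stratification with the submersion property: this is indeed standard in o-minimal geometry (Whitney stratification of a definable map, see e.g.\ van den Dries--Miller), but it is the load-bearing step and deserves a precise reference rather than the phrase ``discarding lower-rank components by further splitting'', which as written sounds like you are only refining the source when in fact one must also refine the base.
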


\begin{proof} 
				We actually establish a slightly stronger result and prove that the result holds by replacing the underlying space $\R^p$ by a $C^r$ definable  finite dimensional manifold  $M$ with $D(x)\subset T_xM$, i.e., $D$ is a set valued  section of the tangent bundle. We follow a classical pattern of stratification theory, see e.g.,  \cite{dries1996geometric}, and we establish that the failure of the projection formula may only occur on a convenient small set. More precisely, we shall provide a stratification ${\cal S}=\{M_1,\ldots, M_q\}$ of $M$ for which each of the sets:
\begin{equation}
		\label{small}
    R_i=\{x\in M_i: \grad f(x)\neq P_{T_x M_i}D(x)\},\mbox{ (with } i=1,\ldots,q),
\end{equation}
has a dimension strictly lower than $M_i$.

Assuming we have such a stratification, let us see how  we would refine the stratification  in order to downsize further the set of ``bad points". For each $R_i$ we would consider the stratification $R^1_i,\ldots,R^r_i$ of $f_{|R_i}$ into smooth functions. For each $j=1,\ldots,r$ the couple $x\mapsto f|_{R^j_i}(x), x\mapsto P_{T_x{R^j_i}}D(x)$ would satisfy the assumption of the theorem but on a manifold with strictly lower dimension. So we could then pursue the process and conclude by exhaustion.

To obtain $\SSS$, we  consider a ``constant-rank" Whitney stratification of $f$, $\SSS=\{M_1,\ldots, M_q\}$, i.e., such that $f$ is smooth on each $M_i$ and has a constant rank, $0$ or $1$ in our case (see \cite{dries1996geometric}).  Take $M_i$ an arbitrary strata and $R_i$ as in \eqref{small}. We only need to prove that $R_i$ has a  dimension lower than  that of $M_i$.

For simplicity set $R_i=R$, $M_i=M$. We consider first the rank 0 case and deduce the other case afterward.

Assume that $\rank\, f=0$ on $M$, i.e., $f$ is constant. We want to prove that for almost all $x$ in $M$, $\max\{\|v\|: v\in P_{T_xM}D(x)\}=0$. We argue by contradiction and assume that  we have a ball of radius $\rho>0$ and center $\bar x$ on which the max is strictly greater than a positive real $m$. Consider the mapping  $G \colon x \rightrightarrows \arg\max\{\|v\|: v\in P_{T_xM}D(x)\}$ which is definable with nonempty compact values. Use the definable choice's theorem to obtain a definable single-valued selection $H$ of $G$, see \cite{Cos99}. $H$ is a (nonsmooth) vector field on $M$ that we may stratify in  a way compatible with $B_M(\bar x, \rho)$. The ball $B_M(\bar x, \rho)$ must contain a stratum of maximal dimension and hence there exists $\hat x \in B_M(\bar x,\rho)$, and $\epsilon\in (0, \rho)$ such that, $H(x)\in T_xM$ is smooth over $B_M(\hat x,\epsilon)\subset B_M(\bar x,\rho)$. We may thus consider the curve defined by
$$\dot \gamma(t)= H(\gamma(t)),\: \gamma(0)=\hat x.$$
For this curve, which is non stationary, one has almost everywhere
$$\frac{d}{d t}f(\gamma(t))=\max_{v\in D (\gamma(t))} \langle \dot \gamma, v\rangle=\max_{v\in P_{T_xM}D f(\gamma(t))}\|v \|^2\geq m^2>0,
$$
which is in contradiction with the fact that $f$ is constant. This concludes the null rank case.

Assume now that $\rank\, f=1$, so that $\grad f$ is nonzero all throughout $M$. Consider $\tilde{D} = D - \grad f$ which is definable, convex valued and has a closed graph. By linearity of the integral $\tilde{D}$ is conservative and has zero as a potential function over $M$. Indeed if $\gamma:[0,1]\to M$ is an arbitrary absolutely continuous curve, we have the set valued identity:
\begin{eqnarray*}
\int_0^1 \langle \tilde D\gamma(t),\dot \gamma(t)\rangle \dd t & = & \int  \langle D(\gamma(t)),\dot \gamma(t)\rangle \dd t - \int_0^1 \langle \grad f(\gamma(t)),\dot \gamma(t)\rangle \dd t\\
 & = & f(\gamma(1))-f(\gamma(0))-(f(\gamma(1))-f(\gamma(0)))\\
 & = & 0.
\end{eqnarray*}
 Since the null function has rank 0 on $M$, we deduce as above that $P_{T_xM}\tilde{D}(x)=\{0\}$ for almost all $x \in M$. Since $\tilde{D} = D - \grad f$ and $\grad f(x) \in T_xM$ for all $x \in M$, we deduce that $P_{T_xM}D(x)=\{\grad f(x)\}$ for almost all $x \in M$ which is what we needed to prove.\end{proof}

\begin{remark}[Definability]{\rm 
The fact that $f$ is definable does not imply that $D$ is definable, for instance setting $\chi(s)=[0,1]$ if $s$ is an integer and $0$ otherwise, $\chi$ is conservative for the zero function and it is not definable since $\mathbb{Z}$ is not definable in any o-minimal structure.  However if $f$ is definable in $\cal O$  so is its Clarke subdifferential, see \cite{bolte2007clarke}. 
On the other hand,  when $D$  definable in $\cal O$, its potential $f$ is not in general definable in $\cal O$. Whether it is definable in a larger o-minimal structure is an open question.}
\end{remark}

\begin{remark}[Alternative proof]{\rm 
Another  method for proving Theorem \ref{t:var}  relies on the repeated use of Theorem \ref{t:Rade}. We chose to avoid the use of strong analysis results, as Rademacher theorem, and pertain to standard self-contained definable arguments.}
\end{remark}

\subsection{Geometric and dynamical properties of definable  conservative fields}
This section describes some properties of definable conservative fields (with definable potential function). The ideas and proofs are direct generalizations of \cite{bolte2007clarke}.

\begin{theorem}[Nonsmooth Morse-Sard for $D$-critical values]
    Let  $D \colon \RR^p \rightrightarrows \RR^p$ be a  conservative field for $f:\R^p\mapsto\R$ and assume that $f,D$ are definable. Then the set of $D$-critical values $\{f(x),\, x \in \RR^p,\, 0 \in D(x)\}$ is finite.
    \label{th:morseSard}
\end{theorem}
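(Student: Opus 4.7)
The plan is to reduce the statement to a stratum-wise classical Morse-Sard theorem using the variational stratification result of Theorem \ref{t:var}. Since both $f$ and $D$ are definable, Theorem \ref{t:var} furnishes a finite $C^r$ Whitney stratification $\SSS = \{M_i\}_{i \in I}$ of $\RR^p$ such that $f$ is $C^r$ on each $M_i$ and the projection formula
\[
P_{T_x M_x}\,D(x) \;=\; \{\grad f(x)\}
\]
holds at every point $x$, where $M_x$ denotes the stratum containing $x$. We are free to take $r$ as large as needed, which will matter when invoking Sard's theorem on individual strata.

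Next, I would transfer the $D$-criticality condition to each stratum. Fix $x \in \RR^p$ with $0 \in D(x)$ and let $M_x$ be its stratum. Projecting $0 \in D(x)$ onto $T_x M_x$ and using the projection formula yields $\grad f(x) = 0$, i.e., $x$ is a classical critical point of the restriction $f|_{M_x}$. Consequently
\[
\{ f(x) : x \in \RR^p,\ 0 \in D(x)\} \;\subset\; \bigcup_{i \in I} \crit\bigl(f|_{M_i}\bigr),
\]
where $\crit(f|_{M_i})$ denotes the set of critical values of the smooth function $f|_{M_i}$ on the manifold $M_i$.

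Finally, I would show that each $\crit(f|_{M_i})$ is finite. For each $i$, the restriction $f|_{M_i}\colon M_i \to \RR$ is a $C^r$ definable function on a $C^r$ definable manifold. Taking $r$ larger than $\dim M_i$, the classical Morse-Sard theorem on manifolds gives that the set of critical values has Lebesgue measure zero in $\RR$. But this set is also definable (as the image under $f$ of the definable set $\{x \in M_i : \grad f(x) = 0\}$), and every definable subset of $\RR$ is a finite union of points and open intervals. A definable subset of $\RR$ of measure zero must therefore be a finite set of points. Since $I$ is finite, the union over $i \in I$ is finite as well, proving the claim.

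The only mildly delicate point is the compatibility of the choice of regularity $r$ with all strata simultaneously; this is handled by invoking the variational stratification with $r > \max_i \dim M_i$, which is legitimate since $\SSS$ is finite. Aside from this, the argument is a clean synthesis of Theorem \ref{t:var}, the projection formula and the standard definable Morse-Sard mechanism from \cite{bolte2007clarke}.
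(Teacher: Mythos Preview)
Your proof is correct and follows essentially the same route as the paper: invoke the variational stratification (Theorem \ref{t:var}), use the projection formula to reduce $D$-criticality to ordinary criticality of $f|_{M_i}$ on each stratum, and then conclude by a Sard-type argument combined with definability. The only cosmetic difference is that the paper appeals directly to the definable Sard theorem (which already works at $C^1$ regularity), whereas you route through the classical Morse--Sard theorem by taking $r > p \geq \max_i \dim M_i$; both are valid and lead to the same conclusion.
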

\begin{proof}
    The proof is  as in \cite[Corollary 5]{bolte2007clarke} and follows from the variational stratification property, applying the definable Sard theorem to each strata. This ensures that the set of critical values has zero Lebesgue measure in $\RR$ and since it is definable, it is a finite set.
\end{proof}

The following is a generalization of the result of Kurdyka \cite{kurdyka1998gradients}
\begin{theorem}[A nonsmooth KL inequality for conservative fields]    \label{th:KLinequality}
    Let  $D \colon \RR^p \rightrightarrows \RR^p$ be a  conservative field for $f:\R^p\mapsto\R$ and assume that $f,D$ are definable. Then there exists $\rho > 0$, $\varphi  \colon [0, \rho) \to \RR_+$, definable strictly increasing, $C^1$ on $(0,\rho)$ with $\varphi (0) = 0$ and a continuous definable function $\chi \colon \RR_+ \to (0, + \infty)$, such that for all $x \in \RR^p$ with $0< |f(x)| \leq \chi(\|x\|)$ and $v \in D(x)$,
    \begin{align*}
        \|v\| \varphi '(|f(x)|) \geq 1.
    \end{align*}
\end{theorem}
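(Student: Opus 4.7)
The plan is to reduce the inequality for a conservative field to the analogous inequality for the Riemannian gradient on strata, and then invoke the definable, stratified version of Kurdyka's theorem.

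First I would apply Theorem \ref{t:var} to the pair $(f,D)$ to obtain a Whitney stratification $\SSS=\{M_i\}_{i\in I}$ of $\R^p$ along which the projection formula $P_{T_xM_x}D(x)=\{\grad f(x)\}$ holds on every active stratum. Since orthogonal projection onto a subspace is $1$-Lipschitz, every $v\in D(x)$ satisfies
$$\|v\|\ \geq\ \|P_{T_xM_x}v\|\ =\ \|\grad f(x)\|.$$
Consequently it suffices to produce definable $\varphi$ and $\chi$ with the stated qualitative properties such that $\|\grad f(x)\|\,\varphi'(|f(x)|)\geq 1$ whenever $0<|f(x)|\leq \chi(\|x\|)$, where $\grad f$ denotes the Riemannian gradient of $f$ on the active stratum.

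Next, since $f$ and $D$ are definable, Theorem \ref{th:morseSard} guarantees that the set of $D$-critical values is finite. After a translation we may localize around a single critical value, assumed to be $0$. On each stratum $M_i$ the restriction $f|_{M_i}$ is $C^r$ and definable, so Kurdyka's theorem \cite{kurdyka1998gradients} applied on $M_i$ supplies a definable, strictly increasing, $C^1$ desingularizing function $\varphi_i$ with $\varphi_i(0)=0$ for which $\|\grad f|_{M_i}(x)\|\,\varphi_i'(|f(x)|)\geq 1$ on appropriate level subsets of $M_i$. A single global $\varphi$ is then constructed from the finitely many $\varphi_i$ by taking a definable pointwise majorant of the derivatives $\varphi_i'$ and integrating; this is legitimate because the class of definable functions is closed under finite maxima and indefinite integration, and the resulting $\varphi$ retains monotonicity, $C^1$ smoothness on $(0,\rho)$, and vanishing at $0$.

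The definable tube $\{x\,:\,|f(x)|\leq \chi(\|x\|)\}$ is introduced to localize around the critical level uniformly in $\|x\|$ and to force compatibility of the stratum-wise estimates at strata boundaries; by the Whitney-$(a)$ condition the tangent spaces of higher-dimensional strata align with those of adjacent lower-dimensional strata as one approaches the boundary, so that a sufficiently thin tube rules out any loss in the gradient estimate across the stratification. The main obstacle is precisely this global gluing together with the handling of \emph{critical points at infinity}, which is what forces the introduction of the auxiliary definable function $\chi$; this is exactly the mechanism developed in \cite[Theorem 14]{bolte2007clarke} for the Clarke subgradient, and the argument there depends only on the existence of a variational stratification, a property we have secured in Step 1 for our general $D$. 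Applying that construction with the stratification of Theorem \ref{t:var} and combining with the norm bound $\|v\|\geq\|\grad f(x)\|$ yields the conclusion.
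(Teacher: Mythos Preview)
Your proposal is correct and follows essentially the same route as the paper: establish the variational stratification via Theorem~\ref{t:var}, use the projection formula to bound $\|v\|\geq\|\grad f(x)\|$ on the active stratum, and then invoke the stratified Kurdyka argument from \cite[Theorem~14]{bolte2007clarke}, whose proof depends only on the projection formula and therefore transfers verbatim. The paper's own proof is a single sentence pointing to exactly this reduction; your additional detail on gluing the stratum-wise desingularizing functions and on the role of $\chi$ is a faithful unpacking of that reference rather than a different argument.
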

\begin{proof}
This is deduced from the variational stratification property  as in Theorem 14 of  \cite{bolte2007clarke}. 
\end{proof}

The following convergence result is a consequence and calls for many questions regarding nonsmooth generalized gradient systems \cite{kurdyka2000proof}.
\begin{theorem}[Conservative  fields curves have finite length]
    Let  $D \colon \RR^p \rightrightarrows \RR^p$ be a definable conservative field and $x \colon \RR_+ \to \RR^p$  a solution of the differential inclusion
    \begin{align*}
        \dot{x}(t) \in - \mathrm{conv}(D(x(t))).
    \end{align*}
    Then if $x$ is bounded, $x$ has finite length: $\displaystyle \int_0^{+\infty} \|\dot{x}(t)\|\dd t < + \infty$ and in particular $x$ is a convergent trajectory.
    \label{th:finiteLength}
\end{theorem}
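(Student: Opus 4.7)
The plan is to run a Kurdyka--\L{}ojasiewicz descent argument analogous to the one developed for smooth or Clarke-regular gradient systems, with the subtlety that the dynamics live in $-\mathrm{conv}(D)$ and not in $-D$. Two preliminary observations justify working with $\mathrm{conv}\,D$ throughout: first, by Remark~(e) following Definition~\ref{def:conservativeMapForF}, $x \rightrightarrows \mathrm{conv}(D(x))$ is itself a conservative field for $f$; second, by Carath\'eodory the convex hull of a compact-valued definable map is again definable in the same o-minimal structure. Hence Theorems~\ref{th:morseSard} and~\ref{th:KLinequality} apply verbatim to $\mathrm{conv}\,D$.

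First I would derive the descent identity. By Lemma~\ref{lem:chainRuleAC} applied to $\mathrm{conv}\,D$, along the absolutely continuous curve $x(\cdot)$ one has, for almost every $t$,
$$\frac{d}{dt}f(x(t)) \;=\; \langle v,\dot x(t)\rangle \qquad \forall\, v \in \mathrm{conv}(D(x(t))).$$
Specialising to $v = -\dot x(t)\in \mathrm{conv}(D(x(t)))$ gives $\frac{d}{dt}f(x(t)) = -\|\dot x(t)\|^2$. Since $x$ is bounded and $f$ is locally Lipschitz, $f\circ x$ is bounded and nonincreasing, hence converges to some $L\in\RR$, with $f(x(t))\ge L$ for every $t$.

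Next I would invoke KL in a neighbourhood of $L$. Applying Theorem~\ref{th:KLinequality} to $\mathrm{conv}\,D$ with potential $f-L$ produces $\rho>0$, a definable $C^1$ desingularizer $\varphi$ and a continuous positive $\chi$ such that
$$\|v\|\,\varphi'(|f(x)-L|) \;\ge\; 1, \qquad 0<|f(x)-L|\le \chi(\|x\|),\; v\in\mathrm{conv}(D(x)).$$
Boundedness of $x(\cdot)$ yields a uniform lower bound $\chi(\|x(t)\|)\ge c>0$, so for $t\ge t_0$ sufficiently large the KL window is entered. If $f(x(t_0))=L$ for some $t_0$, then monotonicity forces $f\circ x\equiv L$ on $[t_0,\infty)$, hence $\dot x\equiv 0$ a.e.\ and the length is trivially finite. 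Otherwise $f(x(t))>L$ for all $t$, and plugging $v = -\dot x(t)$ into the KL inequality gives $\varphi'(f(x(t))-L)\,\|\dot x(t)\|\ge 1$. Multiplying by $\|\dot x(t)\|$ and using the descent identity yields
$$-\frac{d}{dt}\varphi\bigl(f(x(t))-L\bigr) \;=\; \varphi'(f(x(t))-L)\,\|\dot x(t)\|^2 \;\ge\; \|\dot x(t)\|.$$
Integration from $t_0$ to $\infty$ bounds $\int_{t_0}^{\infty}\|\dot x(t)\|\,dt$ by $\varphi(f(x(t_0))-L)<\infty$; combined with absolute continuity on $[0,t_0]$ this proves $\int_0^\infty\|\dot x\|\,dt<\infty$, and a standard Cauchy argument then gives convergence of $x(t)$.

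The decisive step is really the passage from $D$ to $\mathrm{conv}\,D$: without this enlargement, the KL inequality would give a lower bound on $\|v\|$ for $v\in D(x)$, but $-\dot x(t)$ is only guaranteed to lie in $\mathrm{conv}(D(x(t)))$ (and indeed, the identity $\langle v,\dot x(t)\rangle = -\|\dot x(t)\|^2$ valid for \emph{every} $v\in\mathrm{conv}(D(x(t)))$ shows that $-\dot x(t)$ is precisely the minimum-norm element there, so a lower bound on elements of $D$ need not control $\|\dot x\|$). The o-minimal stability of definability under convex hull, together with Remark~(e) after Definition~\ref{def:conservativeMapForF}, removes this obstacle, and the classical Kurdyka finite-length argument then transfers without further difficulty.
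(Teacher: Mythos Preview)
Your proof is correct and follows essentially the same Kurdyka--\L{}ojasiewicz descent argument as the paper: reduce to convex values, establish the energy dissipation $\frac{d}{dt}f(x(t))=-\|\dot x(t)\|^2$, invoke the nonsmooth KL inequality near the limiting level, and integrate the resulting differential inequality for $\varphi(f(x(t))-L)$. The only cosmetic differences are that the paper handles the convexification by a one-line ``without loss of generality'' (whereas you spell out why $\mathrm{conv}\,D$ remains definable and conservative) and that the paper localises the KL inequality via a tubular-neighbourhood uniformisation lemma rather than directly through the function $\chi$; neither changes the substance of the argument.
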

\begin{proof} Assume without loss of generality that $D$ has convex values, set $\|D(x)\| := \min_{v \in D(x)} \|v\|$ for any $x \in \RR^p$. Let $f$ be a potential function for $D$. We have
\begin{align*}
    \frac{d}{dt} f(x(t)) = - \|D(x(t))\|^2
\end{align*}
for almost all $t \geq 0$. We deduce that $t \mapsto f(x(t))$ has a limit, say $0$ (otherwise shift $f$ by a constant). The limit points $\omega$ of $x$ are entirely contained in a compact zone of $[f=0]$. Uniformize the nonsmooth KL inequality on a tubular neighborhood, say $Z$, of this zone (see \cite[Lemma 6]{bolte2014proximal}), and finally assume that $x(t)\in Z$ for some $t\geq t_1$. On $Z$, set $\tilde{D}(x) =  \varphi '(f(x))D(x)$ and observe that $\tilde{D}$ is a conservative field for $\varphi  \circ f$, hence for almost all $t \geq t_1$
\begin{align*}
    \frac{d}{dt} \varphi  \circ f(x(t)) &= \left\langle \dot{x}(t), \varphi '(f(x)) D(x) \right\rangle \\
    &= - \|\dot{x}(t)\|^2  \varphi '(f(x)) \leq -\|\dot{x}(t)\| \varphi '(f(x)) \|D(x)\| \leq -\|\dot{x}(t)\|.
\end{align*}
Since $\varphi (f(x(t))$ tends to $0$, we obtain that $\displaystyle\int_0^{+\infty} \|\dot{x}\| \leq \varphi (f(x(0)))$.
\end{proof}

\section{Automatic differentiation}
Automatic differentiation emerged in the seventies as a computational framework which allows to compute efficiently gradients of multivariate functions expressed through smooth elementary functions. When the function formula involves nonsmooth elementary functions the automatic differentiation approach fails to provide gradients. This issue is largely studied in \cite[chapter 14]{griewank2008evaluating} which discusses connections with Clarke generalized derivatives using notions such as ``piecewise analyticity" or ``stable domain". 
Let us mention \cite{griewank2013stable} which  developed piecewise linear approximation for functions which can be expressed using absolute value, min or max operators. This approach led to successful algorithmic developments \cite{griewank2016lipschitz} but may suffer from a high computational complexity and a lack of versatility (the Euclidean norm cannot be dealt with within this framework). Another attempt using the same model of branching programs was described in \cite{kakade2018provably} where a qua\-li\-fi\-ca\-tion assumption is used to compute Clarke generalized derivatives automatically\footnote{From a practical point of view, qualification is hard to enforce or even check.}.

We provide now a simple and flexible theoretical model for automatic differentiation through conservative fields.

\subsection{A functional framework: ``closed formula functions" }
Automatic differentiation deals essentially with composed functions whose components are ``simple functions", that is functions coming as ``closed formulas". It presumes the existence of a chain rule and aggregates the basic derivation operations according to this principle. We refer to \cite{griewank2008evaluating} for a detailed account. The purpose of this section is to demonstrate that our nonsmooth differentiation model is perfectly fit to deal with this approach in the nonsmooth case.

The function $f$ we consider now is accessible through a recursive algorithm which materializes an evaluation process built on a directed graph. This graph\footnote{Which we shall not define formally since it is not essential to our purpose.} is modelled by  a discrete map called $\parents$  and a collection of known ``elementary functions" $g_k$ defined through:

\begin{itemize}
				\item[a)] $q \in \NN$, $q > p$
				\item[b)] $\parents$ maps the set $\left\{ p+1,\ldots,q \right\}$ into the set of tuples of the form $(i_1,\ldots,i_m)$ where $m \in \NN$ and $i_1, \ldots, i_m$ range over $\{1,\ldots,q-1\}$ without repetition. It has the property that for any $k \in \left\{ p+1,\ldots, q \right\}$, $\parents (k)$ is a tuple without repetition over the indices $\left\{ 1,\ldots,k-1 \right\}$. 
				\item[c)] $\left( g_i \right)_{i=p+1}^q$ such that for any $i = p+1, \ldots, q$, $g_i\colon \RR^{|\parents (i)|} \to \RR$.
\end{itemize}

\begin{algorithm}[ht]
  \caption{Definition program of $f\colon \RR^p \to \RR$}
	\label{alg:algof}

  \begin{algorithmic}[1]

  \item[\textbf{Input:}] $x=(x_1, \ldots x_p)$
    \FOR{$k=p+1,p+2,\ldots q$}
    \STATE Set:
    \[
    x_k = g_{k} (x_{\parents (k)})
    \]
		where $x_{\parents (k)} = \left( x_i \right)_{i \in \parents (k)}$.
    \ENDFOR
    \item[\textbf{Return:}] $x_q=:f(x)$.
\end{algorithmic}
\end{algorithm}

This defines the function $f$ through an operational evaluation  program. 
 \begin{example}\label{ex:exampleAutoDiff}{\rm
				The idea behind automatic differentiation is that the original function is given through a closed formula, which is then interpreted as a composed function in order to make its differentiation (or ``subdifferentiation") amenable to simple chain rule computations. For instance for $f(x)=(x_1x_2+\tan x_2)(|x_1|+x_1x_2x_3)$, we may choose 
				\begin{align*}
				&x_4=g_4(x_1,x_2)=x_1x_2, \: x_5=g_5(x_2)=\tan x_2, \: x_6=g_6(x_1)=| x_1|,\\
				&x_7=g_7(x_3,x_4)=x_3x_4, \\
				& x_8=g_8(x_4,x_5,x_6,x_7)=(x_4+x_5)(x_7+x_6)
				\end{align*}
				where the $\parents$ function is in evidence $\parents(4)=\{1,2\}$, $\parents(5)=\{2\}$, $\parents(6)=\{1\}$, $\parents(7)=\{3,4\}$, $\parents(8)=\{4,5,6,7\}$. Observe that the derivatives or subdifferentials of $g_4,\ldots,g_8$ are known in closed form. Concerning $g_6=|\cdot|$ one has $\partialc g_6(0)=[-1,1]$. Thus in practice we need to choose a specific element in that set, as $0$, and perform the computation with this choice (see below the forward or backward differentiation modes).}
\end{example}

\subsection{Forward and backward nonsmooth automatic differentiation} In order to compute a conservative field for $f$, we need in addition the following: 
\begin{itemize}
				\item[d)] For any $i = p+1, \ldots, q$, one is given a set valued map $D_i\colon \RR^{|\parents (i)|} \rightrightarrows \RR^{|\parents (i)|}$ which is conservative for $g_i$.
\end{itemize}
Note that the above implies that the $g_i$ are locally Lipschitz continuous.

For example, $D_i$ could be the Clarke subgradient of $g_i$ if $g_i$ is definable (a mere  definable selection in the Clarke would also work). For instance in Example \ref{ex:exampleAutoDiff}, one may set $D_6(0)=\{0\}$ or $D_6(0)=[0,1]$.
Given $\left( x_i \right)_{i=1}^q$ as computed in Algorithm \ref{alg:algof}, an algorithm to compute a conservative field of $f$ is described in Algorithm \ref{alg:autodiff0}. This is a direct implementation of the chain rule as described in Lemma \ref{lem:compositionSubgradient}. This ensures that the output of Algorithm \ref{alg:autodiff0} is a conservative field for the function $f$ described in Algorithm~\ref{alg:algof}. Furthermore, the reverse mode of automatic differentiation described in Algorithm \ref{alg:autodiff} computes  essentially  the same quantity but with a lower memory and time footprint.

\begin{algorithm}[t]
  \caption{Forward mode of  automatic differentiation for $f$}
	\label{alg:autodiff0}
   \begin{algorithmic}[1]
	 \item[\textbf{Input:}] variables $(x_1, \ldots x_q)$; $d_i =(d_{ij})_{j=1}^{|\parents (i)|}\in D_i(x_{\parents (i)})$, $i = p+1 \ldots q$
    \STATE Initialize: $
    \frac{\partial x_k}{\partial x} = e_k$, $k = 1,\ldots, p$.
    \FOR{$k= p+1, \ldots q$}
    \STATE Compute:
    \[
    \frac{\partial x_k}{\partial x} = \sum_{j \in \parents (k)} \frac{\partial x_j }{\partial x} d_{kj}
    \]
    \mbox{where $x=(x_1,\ldots,x_p).$}

    \ENDFOR
    \item[\textbf{Return:}]
$\frac{\partial x_q}{\partial x}$.
  \end{algorithmic}
\end{algorithm}

\begin{algorithm}[t]
  \caption{Reverse Mode of automatic differentiation for $f$}
	\label{alg:autodiff}
   \begin{algorithmic}[1]
   \item[\textbf{Input:}] variables $(x_1, \ldots x_q)$; a
    the map $\{\parents (t)\}_{t\in \{1,\ldots q\}}$;
    associated derivatives $d_i =(d_{ij})_{j=1}^{|\parents (k)|}\in D_i(x_\parents (i))$, $i = p+1 \ldots q$
    \STATE Initialize: $ v = (0,0,\ldots, 0,1) \in \RR^q$
    \FOR{$t= q, \ldots p+1$} 
    \FOR{$j \in \parents (t)$}
        \STATE Update coordinate $j$ of $v$:
    \[
    v[j] \mathrel{:=}  v[j] + v[t] d_{tj}\quad \text{ i.e., update } v[j] \text{ to }  v[j] + v[t] d_{tj}
    \]
    \ENDFOR
    \ENDFOR
    \item[\textbf{Return:}]
$\left(v[1], v[2], \ldots, v[p]\right) $.
  \end{algorithmic}
\end{algorithm}

				Let $f$ be gi\-ven as in Algorithm \ref{alg:algof} and $x \in \RR^q$ be given the trace of all intermediate values at the end of Algorithm \ref{alg:algof}. Set 
				\begin{align*}
								D = \{v \in \RR^p;\; \text{ output of Algorithm \ref{alg:autodiff0}}\}
				\end{align*}
				for all possible choices of $d_k \in D_k(x_{\parents(k)})$, $k=p+1, \ldots, q$. The first $p$ coordinates of $x$ are the arguments of $f$, repeating the same construction for any value of these coordinates defines a set valued field on $\RR^p$ with values in subsets of $\RR^p$. This field is called the {\em forward automatic differentiation field} of Algorithm \ref{alg:algof}. The {\em backward automatic differentiation field of Algorithm~\ref{alg:algof}} is defined similarly using Algorithm~\ref{alg:autodiff}.

\begin{theorem}[Forward and backward ``autodiff" are conservative fields]
				Let $f$ be  gi\-ven through Algorithm \ref{alg:algof}. Then 
				the forward and backward automatic differentiation fields are conservative for $f$.
  \label{l:autodiff}   
\end{theorem}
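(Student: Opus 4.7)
My plan is to establish the forward case by induction on the index $k \in \{p,p+1,\ldots,q\}$, tracking the intermediate ``prefix'' mapping $\Phi_k\colon \RR^p \to \RR^k$ defined by $\Phi_k(x_1,\ldots,x_p) = (x_1,\ldots,x_k)$, where $x_{p+1},\ldots,x_k$ are computed as in Algorithm \ref{alg:algof}. The backward mode will then be handled by observing that it computes exactly the same matrix–matrix product as the forward mode, only with a different association of the factors.

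The inductive claim is that $\Phi_k$ admits a conservative mapping $J_k \colon \RR^p \rightrightarrows \RR^{k \times p}$ whose rows are precisely the vectors $\frac{\partial x_i}{\partial x}$ produced by lines 1--4 of Algorithm \ref{alg:autodiff0} (one valid selection per row, ranging over $d_i \in D_i(x_{\parents(i)})$). For $k = p$, one takes $J_p = I_p$, which is trivially a conservative mapping for the identity $\Phi_p$. For the inductive step, note that the $k$-th coordinate function of $\Phi_k$ is $g_k \circ \pi_{\parents(k)} \circ \Phi_{k-1}$, where $\pi_{\parents(k)}$ is the coordinate projection. Since $\Phi_{k-1}$ has conservative mapping $J_{k-1}$ (inductive hypothesis), $\pi_{\parents(k)}$ has the constant conservative mapping equal to its own matrix, and $g_k$ has conservative field $D_k$, Lemma \ref{lem:compositionJacobian} (the product of conservative mappings is conservative) yields that the row vector $\sum_{j\in \parents(k)} \frac{\partial x_j}{\partial x}\, d_{kj}$, with $d_k \in D_k(x_{\parents(k)})$, is a conservative field for the scalar function $g_k \circ \pi_{\parents(k)} \circ \Phi_{k-1}$. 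Lemma \ref{lem:DIjacobian} then lets me stack this new row under the rows of $J_{k-1}$ to obtain a conservative mapping $J_k$ for $\Phi_k$, closing the induction. Finally, applying Lemma \ref{lem:projRowJacobian} (coordinates of conservative mappings are conservative) to the last row of $J_q$ identifies the forward automatic differentiation field as a conservative field for $f = (\Phi_q)_q$.

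For the backward mode, I would argue that, given the same choice of selections $d_i \in D_i(x_{\parents(i)})$, Algorithm \ref{alg:autodiff} returns exactly the same vector as Algorithm \ref{alg:autodiff0}. Indeed, unrolling the forward recursion shows that $\frac{\partial x_q}{\partial x}$ equals the product $e_q^\top \cdot M_q \cdot M_{q-1}\cdots M_{p+1}$, where $M_k \in \RR^{k \times (k-1)}$ is the matrix whose first $k-1$ rows form the identity and whose $k$-th row has $d_{kj}$ in position $j \in \parents(k)$ and zeros elsewhere. The forward mode computes this product left-to-right (peeling $M_{p+1}$ first), while the reverse mode computes it right-to-left: the loop variable $v$ represents the current partial product $e_q^\top M_q \cdots M_{t+1}$, and the update $v[j] \mathrel{:=} v[j] + v[t]\, d_{tj}$ is exactly one more multiplication by $M_t$ on the right. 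Hence the output of Algorithm \ref{alg:autodiff} coincides with that of Algorithm \ref{alg:autodiff0}, so the backward field equals the forward field and is conservative by the preceding paragraph.

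The main obstacle I anticipate is purely notational: keeping the bookkeeping straight between the intermediate vectors $x_{p+1},\ldots,x_q$ (which are themselves functions of $x = (x_1,\ldots,x_p)$), the row indices of the evolving Jacobian $J_k$, and the parents map. Once the recursive structure $\Phi_k = (\Phi_{k-1}, g_k \circ \pi_{\parents(k)} \circ \Phi_{k-1})$ is made explicit, the proof is a mechanical application of the three calculus lemmas \ref{lem:DIjacobian}, \ref{lem:projRowJacobian}, \ref{lem:compositionJacobian}, with no further analytic input required.
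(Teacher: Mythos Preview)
Your proposal is correct and follows essentially the same strategy as the paper: both arguments write $f$ as a composition of elementary ``update'' maps (the paper via $G_k\colon\RR^q\to\RR^q$ padding everything to dimension $q$, you via the growing prefixes $\Phi_k\colon\RR^p\to\RR^k$), invoke Lemma~\ref{lem:compositionJacobian} to propagate conservativity through the composition, and then show the backward mode computes the same matrix product in the opposite association.

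One small imprecision worth tightening: in your inductive step, invoking Lemma~\ref{lem:DIjacobian} to ``stack the new row under the rows of $J_{k-1}$'' literally produces the product set with \emph{independent} row choices, which is strictly larger than your $J_k$ (where the last row reuses the same $V'\in J_{k-1}$ as the first $k-1$ rows). The cleanest repair is to apply Lemma~\ref{lem:DIjacobian} instead to the one-step map $F_k(y)=(y,\,g_k(y_{\parents(k)}))$ to get a conservative mapping $L_k$, and then Lemma~\ref{lem:compositionJacobian} to $\Phi_k=F_k\circ\Phi_{k-1}$, yielding exactly $L_k\cdot J_{k-1}=J_k$; this is precisely the paper's product $J_{k}\times J_{k-1}\times\cdots\times J_{p}$ written inductively.
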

\begin{proof}
    We substitute the functions $\left(g_k\right)_{k=p+1}^q$ by functions $\left(G_k\right)_{k=p+1}^q$, such that for each $k = p+1,\ldots,q$
    \begin{align*}
        G_k \colon \RR^q &\mapsto\RR^q \\
        x &\mapsto x + e_k(g_k(x_{\parents (k)}) - x_k),
    \end{align*}
    where $e_k$ is the $k$-th element of the canonical basis. Similarly, for all $k \in p+1,\ldots,q$, we still denote by $D_k \colon \RR^q \rightrightarrows \RR^q$ the conservative field of $g_k$ seen as a function of $x_1,\ldots, x_q$ (simply add zeros to coordinates which do not correspond to parents of $k$). Then $f$ as computed in Algorithm \ref{alg:algof}, is equivalently given by
    \begin{align*}
        f(x) = \left [\left(G_q \circ G_{q-1} \circ \ldots \circ G_{p+1} (x_1, \ldots, x_p, 0,\ldots,0)\right)\right]_q =  \left[G_q \circ G_{q-1} \circ \ldots \circ G_p (x)\right]_q
    \end{align*}
    where $G_p$ maps the first $p$ coordinates $(x_k)_{k=1}^p$ to the vector $((x_i)_{i=1}^p, (0)_{i=p+1}^q) \in \RR^p$ and indexation $[\cdot]_q$ denotes the $q$-th coordinate of a $q$ vector.
    For each $k = p+1 \ldots, q$, $x \in \RR^q$, the  following ``componentwise derivative" of $G_k$ in a matrix form
    \begin{align}
        L_k \colon x \mapsto \left\{I - e_k e_k^T + e_k d^T, \quad d \in D_k(x)\right\},
        \label{eq:conservativeMatrixK}
    \end{align}
   is a conservative mapping for $G_k$ by Lemma \ref{lem:DIjacobian}. For each $k = p+1 \ldots, q$, we choose one such matrix according to a fixed input of Algorithm \ref{alg:autodiff0}, $d_k \in D_k(x_1,\ldots, x_q)$,
    \begin{align}
        J_{k} =  I - e_k e_k^T + e_k d_k^T \in L_k(x_1,\ldots, x_q)
        \label{eq:conservativeMatrixK2}
    \end{align}
    For each $k = p+1,\ldots q$, denote by $M_k$ the matrix defined by blocks as follows
    \begin{align*}
        M_k = 
        \begin{pmatrix}
             I_p  \\
              \left(\frac{\partial x_{p+1}}{\partial x}\right)^T\\
              \vdots\\
              \left(\frac{\partial x_k}{\partial x}\right)^T\\
              0
        \end{pmatrix} \in \RR^{q \times p}
    \end{align*}
    where $\frac{\partial x_k}{\partial x}$ is computed by Algorithm \ref{alg:autodiff0}.
    Denote also by $J_{p} \in \RR^{q \times p}$ the diagonal matrix which diagonal elements are $1$ and the remainders are $0$, the Jacobian of $G_p$. One can see that
    \begin{align*}
        M_k = J_{k} \times J_{k-1} \times \ldots \times J_{p+1} \times J_{p}
    \end{align*}
		for all $k = p+1, \ldots q$, where $\times$ denotes the usual matrix product. This is easily seen for $M_{p+1}$ as Algorithm \ref{alg:autodiff0} computes $\frac{\partial x_1}{\partial x_{1,\ldots,p}} = d_1$. The rest is a simple recursion. In the end Algorithm \ref{alg:autodiff0} computes
    \begin{align*}
        e_q^T M_q &= e_q^T \times J_{q} \times J_{q-1} \times \ldots \times J_{p+1} \times J_{p} \\
        &\in e_q^T\times L_{q} \times L_{q-1} \times \ldots \times L_{p+1} \times J_{p}
    \end{align*}
    Combining  Lemmas \ref{lem:projRowJacobian} and \ref{lem:compositionJacobian}, the right hand side is a conservative field for $f$. Actually it can be seen from equations \eqref{eq:conservativeMatrixK} and \eqref{eq:conservativeMatrixK2} that the right hand side consists precisely of all possible outputs of Algorithm \ref{alg:autodiff0} for all possible choices of $d_k$, $k = p+1, \ldots, q$. This proves the claim for the forward automatic differentiation field obtained by Algorithm~\ref{alg:autodiff0}. 
    
    Regarding Algorithm \ref{alg:autodiff}, we will show that it computes the same quantity reversing the order of the products. For all $t = q,\ldots, p+1$, let $v_t \in \RR^q$ be the vector $v$ obtained after step $t$ of the ``for loop" of Algorithm \ref{alg:autodiff}. We have $v_q = e_q + d_q = (I + d_q e_q^T) e_q$. An induction shows that for all $t = q, \ldots, p+1$  
    \begin{align*}
        v_t = (I + d_t e_t^T) \ldots (I + d_q e_q^T) e_q.
    \end{align*}
    Using the same notation as in equation \eqref{eq:conservativeMatrixK2}, set for $t = q,\ldots p+1$
    \begin{align*}
        w_t = J_{t}^T \times \ldots \times J_{q}^T \times e_q
    \end{align*}
    It is easy to see that $w_q$ and $v_q$ agree on the first $q-1$ coordinates. By recursion, for $t = q,\ldots p+1$, $w_t$ and $v_t$ agree on the first $t-1$ coordinates (recall that $d_t$ is supported on the first $t-1$ coordinates). We deduce that $w_{p+1}$ and $v_{p+1}$ agree on the first $p$ coordinates so that the output of Algorithm \ref{alg:autodiff} is 
    \begin{align*}
        J_{G_p}^T v_{p+1} =  J_{G_p}^T w_{p+1} =  J_{G_p}^T \times J_{G_{p-1}}^T \times \ldots \times J_{G_q}^T \times e^q = M_q^T e_q
    \end{align*}
    which is the same quantity as the one computed by Algorithm \ref{alg:autodiff0}. The claim follows for the backward automatic differentiation field.
\end{proof}

\begin{remark}{\rm 
                Automatic differentiation is not necessarily convex valued. Consider the function 
                \begin{align*}
                    f \colon (x,y) \mapsto |\max(x,y)|.
                \end{align*}
                Both the max and absolute value functions are convex so that  their respective convex subgradients are  conservative fields.
                Applying the chain rule in Lemma \ref{lem:compositionJacobian} at $x = y = 0$ we obtain a conservative field for $f$ evaluated at zero of the form 
                \begin{align*}
                    D=\left\{t v;\; t \in [-1,1], \, v \in \Delta \right\}
                \end{align*}
                where $\Delta$ is the one dimensional simplex in $\RR^2$. The set $D$ is not convex.
                
               }
\end{remark}

The following corollary is a direct consequence of Theorems \ref{th:stratification} and \ref{l:autodiff}. Note that a result close to equation \eqref{Df=grad} was already guessed in \cite[Proposition 14.2]{griewank2008evaluating}.
\begin{corollary}[Automatic differentiation returns the gradient a.e.] Assume that all the $g_k$ defining $f$ and their conservative fields $D_k$, are definable, and denote by $D_f$ a resulting automatic differentiation field (either forward or backward). Then $f$ is differentiable almost everywhere with
                \begin{equation}\label{Df=grad}
                   D_f=\{\nabla f\} 
                \end{equation}
                 on the complement of finitely many smooth manifolds with dimension at most $p-1$. Furthermore, for any $v,w$ in $\R^p$,
                 \begin{align}\label{intDfSegment}
                   f(w)-f(v)&=\int_0^1\left\langle D_{f}((1-t)v+tw),w-v\right\rangle\dd t.
                \end{align}
\end{corollary}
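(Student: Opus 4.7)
The strategy is to combine Theorem~\ref{l:autodiff} with the variational stratification machinery of Section~\ref{s:tame}. First I would note that since each elementary function $g_k$ and each conservative field $D_k$ is definable in the fixed o-minimal structure $\mathcal{O}$, the forward (resp.\ backward) automatic differentiation field $D_f$ produced by Algorithm~\ref{alg:autodiff0} (resp.\ Algorithm~\ref{alg:autodiff}) is itself definable: it is obtained from the $D_k$ and the $g_k$ by a finite sequence of compositions, sums, and products, all of which preserve definability. Likewise $f$ is definable, and by Theorem~\ref{l:autodiff}, $D_f$ is a conservative field for~$f$.

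Next I would apply Theorem~\ref{t:var} to the pair $(f,D_f)$: there exists a finite $C^r$ Whitney stratification $\SSS = (M_i)_{i\in I}$ of $\R^p$ such that $f$ is $C^r$ on each stratum and
$$
\mathrm{Proj}_{T_{M_x}(x)} D_f(x) = \{\grad f(x)\}\qquad \forall x \in \R^p,
$$
where $M_x$ is the active stratum at $x$. I would then separate the strata according to their dimension. The $p$-dimensional strata are precisely the open subsets of $\R^p$ appearing in $\SSS$; on such a stratum $T_{M_x}(x)=\R^p$, so the projection is the identity and the formula reduces to $D_f(x)=\{\grad f(x)\}=\{\nabla f(x)\}$. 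The remaining strata, of which there are finitely many by Theorem~\ref{t:var}, are $C^r$ submanifolds of dimension at most $p-1$, and their union is exactly the exceptional set where \eqref{Df=grad} might fail. This gives the first assertion.

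For \eqref{intDfSegment}, I would apply the chain rule characterization of conservativity (Lemma~\ref{lem:chainRuleAC}) to the absolutely continuous affine path $\gamma(t)=(1-t)v+tw$, which yields, for almost every $t\in[0,1]$,
$$
\tfrac{d}{dt} f(\gamma(t)) = \langle u,\, w-v\rangle\qquad \forall u \in D_f(\gamma(t)).
$$
Integrating over $[0,1]$ and invoking the Aumann integration viewpoint of Remark~\ref{rem:aumannIntegral} gives \eqref{intDfSegment} as a singleton-valued identity.

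The only mildly subtle point is the preservation of definability through the algorithmic construction; but this is automatic because the whole procedure involves only finitely many definable operations. Consequently no genuine obstacle arises, and the corollary follows directly from the two tools assembled above.
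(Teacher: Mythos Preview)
Your proposal is correct and follows essentially the same route as the paper: establish definability of $f$ and $D_f$, invoke Theorem~\ref{t:var} to obtain a variational stratification, read off $D_f=\{\nabla f\}$ on the full-dimensional strata, and integrate along the segment for~\eqref{intDfSegment}. The only cosmetic difference is that the paper cites Definition~\ref{def:conservativeMapForF} directly for the integral identity whereas you pass through Lemma~\ref{lem:chainRuleAC}, which is equivalent.
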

\begin{proof} From Theorem \ref{l:autodiff}, $D_f$ is a conservative field for $f$. Basic closedness properties of definable objects ensure that both $f$ and $D_f$ are definable so that Theorem \ref{prop:projImpliesChainRule} ensures the existence of a variational stratification (see Definition \ref{def:projectionFormula}). The fact that $f$ is differentiable almost everywhere is a basic result of tame geometry. To obtain \eqref{Df=grad}, use the stratification provided in Theorem \ref{prop:projImpliesChainRule}, and consider the dense open set given by the union of the finite number of strata of maximal dimensions. The integration formula is the application of Definition \ref{def:conservativeMapForF} along  segments. \end{proof}
                
\begin{remark}[The limitations of the smooth chain rule] \label{smoothchainrule}     {\rm  (a) It is surprising to use Theo\-rem \ref{prop:projImpliesChainRule} which is non trivial to obtain \eqref{Df=grad}. It is instead tempting to simply use the expression of $f$ provided in Theorem  \ref{l:autodiff}: 
                $$f(x)=e_q^TG_p\circ \ldots\circ G_q (x)$$ and to differentiate it ``almost everywhere" to obtain 
                $$f'(x)=e_q^TG'_p\left(G_{p-1}(\ldots G_q(x))\ldots)\right)\circ \ldots\circ G'_q(x),$$
                which would give the desired result. 
Unfortunately this expression has no obvious meaning, since for instance, the image of $G_q$ may be entirely contained in the points of non-differentiability of $G_{q-1}$, so that $G'_{q-1}(G_q(x))$ has no meaning. This result is illustrated further in the deep learning section through an experimental example.\\
(b) Observe as well that we do not know  whether such a result could hold without definability assumptions.
 }
   \end{remark}

\section{Algorithmic consequences and deep learning}\label{s:sto}

What follows is in the line of many works on decomposition methods \cite{bottou2008tradeoffs}, in particular those involving nonconvex problems, see e.g.,  \cite{moulines2011nonasymptotic,bottou2018optimization,davis2018stochastic,majewski2018analysis,chizat}. Our study uses  connections with dynamical systems, see e.g.,   \cite{ljung1977analysis,kushner20003stochastic,benaim2005stochastic,borkar2009stochastic,bianchi2019constant} in order to take advantage of the ``curve friendly" nature of conservative fields and automatic differentiation. Using our for\-ma\-lism,  we gather ideas  from \cite{davis2018stochastic,castera2019inertial,adil}, and use the Bena\"im-Hofbauer-Sorin approach \cite{benaim2005stochastic}, to obtain almost sure subsequential convergence to steady states that are carefully defined. To our knowledge, this provides the first proof for the subsequential convergence of the stochastic gradient descent  with mini-batches in deep learning when the actual  backpropagation model is used instead of the subgradient one's, which is the case in almost all applications involving nonsmooth objects.  As outlined in a conclusion, many more algorithms could be considered along this perspective.

All sets and functions we consider in this section are definable in the same o-minimal structure.
\subsection{Mini-batch stochastic approximation for finite nonsmooth nonconvex sums aka ``nonsmooth nonconvex SGD"}
We consider the following loss function on $\RR^p$
\begin{align}\label{Jmodel}
    \J \colon w \mapsto \frac{1}{n} \sum_{i=1}^n f_i(w)
\end{align}
where each $f_i \colon \RR^p \to \RR$ is definable and locally Lipschitz continuous. We assume that for each $i = 1 \ldots n$, $D_i \colon \RR^p \rightrightarrows \RR^p$ is a definable  conservative field for $f_i$, for example the ones provided by automatic differentiation. We consider the following recursive process, given a sequence of nonempty mini-batches subsets of $\{1,\ldots,n\}$, $\left( B_k \right)_{k \in \NN}$, taken independently, uniformly at random, $\left( \alpha_k \right)_{k \in \NN}$ a deterministic sequence of positive step sizes, and $w_0 \in \RR^p$, iterate 
$$\mbox{Mini-batch SGD with back-prop: }\qquad\left\{
\begin{aligned}
& w_{k+1}  = w_k - \alpha_k d_k\\ 
& d_k \in \frac{1}{|B_k|}\sum_{i \in B_k} D_i(w_k). \label{eq:miniBatch}
\end{aligned}
\right.\qquad\qquad\qquad$$
We set 
\begin{align*}
        D_\J \colon w \rightrightarrows \frac{1}{n} \conv\left(\sum_{i=1}^n D_i(w) \right)   
\end{align*}
and $\crit_J = \{w \in \RR^p,\, 0 \in D_\J(w)\}$, the set of $D_\J$-critical points.
Combining our results with the approach of \cite{benaim2005stochastic}, we obtain the following asymptotic characterization.
\begin{theorem}[Convergence of mini-batch SGD]\label{t:sgd}       
				Let $\alpha_k = o(1/\log(k))$, denote by $\BB$ the event defined by $\sup_{k} \|w_k\| < + \infty$ and assume that $\BB$ occurs with a positive probability. 
				
				Setting, $\bar{w} \subset \RR^p$, the set of accumulation points of $(w_k)_{k\in \NN}$, we have, almost surely on $\BB$, $\emptyset \neq \bar{w} \subset \crit_\J$ and $\J$ is constant on $\bar{w}$.
\end{theorem}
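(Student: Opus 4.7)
The plan is to interpret the recursion as a perturbed Euler scheme for the differential inclusion $\dot w(t) \in -D_\J(w(t))$ and apply the stochastic approximation machinery of Bena\"im-Hofbauer-Sorin. First I would produce a martingale-difference decomposition of $d_k$. Using measurable selection (as invoked in the proof of Theorem \ref{t:Rade}), choose for each $i$ a measurable $a_i \colon \RR^p \to \RR^p$ with $a_i(w) \in D_i(w)$ matching the rule producing $d_k$, so that $d_k = \tfrac{1}{|B_k|}\sum_{i\in B_k} a_i(w_k)$. Since $B_k$ is uniformly drawn and independent of the past $\mathcal{F}_k$, the conditional mean $b_k := \EE[d_k \mid \mathcal{F}_k] = \tfrac{1}{n}\sum_{i=1}^n a_i(w_k)$ lies in $D_\J(w_k)$ by Corollary \ref{c:sumRule} together with the fact that convexification preserves conservativity. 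Setting $\eta_k := d_k - b_k$ yields the standard form $w_{k+1} = w_k - \alpha_k(b_k + \eta_k)$ with $(\eta_k)$ an $\mathcal{F}_{k+1}$-martingale difference.

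Next, on the event $\BB$ the iterates remain in a compact set, and each $D_i$ is locally bounded (Lipschitz continuity of $f_i$ and \cite{borwein2001generalized}), so $\eta_k$ is uniformly bounded on $\BB$. Combining $\alpha_k = o(1/\log k)$ with Azuma-Hoeffding concentration applied to the bounded martingale $\sum_j \alpha_j \eta_j$ gives, almost surely on $\BB$,
$$\sup_{k \le m \le m(k,T)} \Bigl\|\sum_{j=k}^m \alpha_j \eta_j\Bigr\| \longrightarrow 0 \quad \text{as } k\to\infty,$$
for every horizon $T>0$, where $m(k,T)$ is the first index with $\sum_{j=k}^{m(k,T)} \alpha_j \ge T$. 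This is precisely the asymptotic pseudo-trajectory criterion, so the Bena\"im-Hofbauer-Sorin theorem yields that $\bar w$ is, almost surely on $\BB$, a nonempty compact connected internally chain-transitive set of $\dot w \in -D_\J(w)$.

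The potential $\J$ is then a strict Lyapunov function for this inclusion. By Corollary \ref{c:sumRule} and convex-hull closure, $D_\J$ is a conservative field for $\J$, and Lemma \ref{lem:chainRuleAC} gives, along any absolutely continuous solution, $\frac{d}{dt} \J(w(t)) = -\|v(t)\|^2$ for every measurable selection $v(t) \in D_\J(w(t))$, which is strictly negative off $\crit_\J$. A standard consequence is that on any internally chain-transitive set $\J$ takes values in a connected component of $\J(\crit_\J)$. The nonsmooth Morse-Sard Theorem \ref{th:morseSard} for definable conservative fields ensures that $\J(\crit_\J)$ is in fact \emph{finite}, so $\J$ must be constant on $\bar w$, and moreover $\bar w \subset \crit_\J$, as claimed.

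I expect the principal obstacle to lie in the APT step under the weak step-size condition $\alpha_k = o(1/\log k)$: one must control the martingale perturbation uniformly on moving time windows of length $T$, and since the classical sufficient condition $\sum_k \alpha_k^2 < \infty$ is \emph{not} assumed, this requires a direct concentration argument exploiting the boundedness of $\eta_k$ on $\BB$ together with the logarithmic decay rate. The remaining ingredients (measurable selection, APT $\Rightarrow$ ICT, Lyapunov trapping of ICT sets, Morse-Sard) are black-box applications of results already collected in this paper or in the references \cite{benaim2005stochastic,davis2018stochastic}.
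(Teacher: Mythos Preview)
Your proposal is correct and follows essentially the same route as the paper: cast the recursion as a perturbed Euler scheme for $\dot w \in -D_\J(w)$, verify the noise/step-size hypotheses of Bena\"im--Hofbauer--Sorin (the paper cites their Theorem~3.6 and Remark~1.5(ii), the latter being precisely the bounded-noise $o(1/\log k)$ criterion you recover via Azuma--Hoeffding), and conclude via the Lyapunov property together with the definable Morse--Sard theorem that $\J(\crit_\J)$ is finite. One minor slip: in the Lyapunov step the chain rule gives $\tfrac{d}{dt}\J(w(t)) = -\|\dot w(t)\|^2$, not $-\|v(t)\|^2$ for \emph{every} selection $v(t)\in D_\J(w(t))$; the descent inequality still follows, so the argument goes through.
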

\begin{proof}
    $D_\J$ is a conservative field for $\J$. 
    Hence $\J$ is a Lyapunov function for $\crit_\J$ and the differential inclusion
    \begin{align*}
        \dot{w} \in - D_\J(w),
    \end{align*}
    which admits solutions according to \cite[Chapter 2, Theorem 3]{aubin1984differential}. 
    We have by uniform randomness
    \begin{align*}
        \EE_{B_k} \left[\frac{1}{|B_k|}\sum_{i \in B_k} D_i(w_k) \right] = \frac{1}{n} \sum_{i=1}^n D_i(w_k) \subset D_\J(w_k).
    \end{align*}
		In the event of $\BB$, the sequence remains bounded so that, almost surely on $\BB$, the following is also bounded
    \begin{align*}
      &  \sup_k \|d_k - v\| \\
       & \mathrm{s.t.} \quad v \in \EE_{B_k} \left[\frac{1}{|B_k|}\sum_{i \in B_k} D_i(w_k) \right]
    \end{align*}
    Theorem \ref{th:stratification} implies that $\J(\crit_\J)$ is finite, and hence has empty interior. 
		The result follows by combining Theorem 3.6, Remark 1.5(ii) and Proposition 3.27 of \cite{benaim2005stochastic}, see also \cite[Proposition 4.4]{benaim1999dynamics} for discussion on the step size and \cite[Section 2.2]{borkar2009stochastic} for a discussion on sequence boundedness.
\end{proof}

\begin{remark}[Convergence]{\rm  We conjecture that, beyond subsequential convergence, ite\-ra\-tes should converge in the case of definable potentials.}
\end{remark}

\subsection{Deep Neural Networks and nonsmooth backpropagation}
	
We pertain to feed forward neural networks even though much more general cases are adapted to our auto-differentiation setting and to our definability assumptions.  

Let us consider two finite dimensional real vector spaces spaces $\XX,\YY$. The space $\XX$ models input objects of interest (images, economical data, sentences, texts) while $\YY$ is an output space formed by properties of interest for the objects under consideration. The points $y$ in $\YY$ are often called labels. The goal of deep learning is to label automatically objects in $\XX$ by ``learning the labelling principles" from a large dataset of known paired vectors $(x_i,y_i)_{i=1,\ldots,n}$. Given $x$ in $\XX$, we thus wish to discover its label $y$. This is done by designing a predictor function whose parameters are organized in $L$ layers, each of which is represented by an affine function $A_j \colon \RR^{p_j} \to \RR^{p_{j+1}}$  for values $p_j \in \NN$, $j = 1,\ldots, L$. Our predictor function has then the compositional form
\begin{align}
    \XX \ni x\to \sigma_L(A_L(\sigma_{L-1}(A_{L-1}(\ldots\sigma_2(A_2(\sigma_1 (A_1 (x)))) \ldots))))\in \YY
    \label{eq:predictorDeepLearning}
\end{align}
	where for each $j$, the function $\sigma_j \colon \RR^{p_j} \to \RR^{p_j} $,  is locally Lipschitz continuous. These functions are called {\em activation functions} and are usually univariate functions applied coordinatewise. Very often one simply takes a single activation function $\sigma \colon \RR \to \RR$ and apply it to coordinates of each layer. 
Classical choices for $\sigma$ include:
\begin{enumerate}
				\item identity: $t \mapsto t$,
				\item sigmoid: $t \mapsto \frac{1}{1 + e^{-t}}$,
				\item hyperbolic tangent: $t \mapsto \mathrm{tanh}(t)$,
				\item softplus: $t \mapsto \log(1 + \exp(t))$,
				\item ReLU: $t \mapsto \max\{0,t\}$, aka positive part,
				\item ``Leaky-ReLU'': $t \mapsto \max\{0,t\} + \alpha \min\{t, 0\}$, $\alpha > 0$, parameter.
				\item piecewise polynomial activations.
\end{enumerate}
Examples 1, 5, 6, 7 are semialgebraic, the others are  definable in the same o-minimal structure ($\RR$-exp definable sets). Among these examples, the ReLU activation function \cite{glorot2011deep} played a crucial role in the development of deep learning architectures as it was found to be efficient in reducing ``vanishing gradient'' issues (those being related to the flatness of the commonly used sigmoid). This activation function is still widely used nowadays and constitutes one of the motivations for studying deeper automatic differentiation oracles applied to nonsmooth functions.

In order to lighten the notations, the weights of all the $A_i$ in \eqref{eq:predictorDeepLearning} are concatenated into a global weight vector $w$ in $\R^p$, so we may simply write the parametrized predictor with parameter $w$, 
$$g(w,x):=\sigma_L(A_L(\sigma_{L-1}(\ldots\sigma_1 (A_1(x)))))).$$ 
Learning a predictor function is finding an adequate collection of weights $w$. To do so one trains the neural networks by minimizing a loss of the form: 
	\begin{equation}
	 \J(w)=\frac{1}{n}\sum_{i=1}^n l(g(w,x_i),y_i)   
	\end{equation}
	where $l$ is some elementary loss function, typical choices include the square loss $l(a,b)=\frac12\|a-b\|^2$, $(a,b) \in \RR^2$ for regression or binary cross entropy for classification: $l(a,b) = b \log(a) + (1-b) \log(1-a)$, where $a \in (0,1)$, $b \in \{0,1\}$. 
	In view of matching the abstract model \eqref{Jmodel}, set $f_i(x)=l(g(w,x_i),y_i)$ for all $i$. It is obvious to see that:
\begin{lemma}[Deep Learning loss in algorithmic form]
				Given $\sigma_1,\ldots,\sigma_L$ and $l$, each term $f_i$  of the deep learning loss $\J$ has a representation as in Algorithm 1.
\end{lemma}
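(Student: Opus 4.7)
The plan is to explicitly unfold the composition $f_i(w) = l(g(w,x_i), y_i)$ into a straight-line program obeying the specification of Algorithm~\ref{alg:algof}. The parameters $w \in \RR^p$ play the role of the inputs $x_1, \ldots, x_p$; the data pair $(x_i, y_i)$ is treated as a fixed constant that may be absorbed into the relevant elementary functions $g_k$ (or, if one prefers a single evaluation graph for all samples, introduced as additional constant nodes at the beginning of the program). The recursion then emulates the structure $\sigma_L \circ A_L \circ \cdots \circ \sigma_1 \circ A_1$ followed by the loss.

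For each layer $j = 1,\ldots,L$, the affine map $A_j(u) = W_j u + b_j$ has $k$-th coordinate $\sum_\ell (W_j)_{k\ell}\, u_\ell + (b_j)_k$, which is a scalar expression in finitely many entries of $w$ and of the previous activation vector $u$. I would break this scalar expression into a sequence of binary scalar multiplications and binary scalar additions, each of which is a smooth (and in particular definable) elementary function with exactly two distinct scalar parents. The activation $\sigma_j$ is assumed to act coordinatewise, so each of its output coordinates is a univariate elementary function (one of the ones listed in the paper, all locally Lipschitz and definable in the fixed o-minimal structure) of a single parent variable produced at the previous stage. Finally $l(\cdot, y_i)$ is itself a closed-form expression (for instance a sum of binary products, logarithms, or squares) which we decompose into scalar elementary steps in the same fashion. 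Concatenating all these scalar nodes in topological order produces a list $x_{p+1}, \ldots, x_q$ such that each $x_k$ is a known elementary function of strictly earlier variables and $x_q = f_i(w)$.

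There is essentially no obstacle here beyond bookkeeping. The only technical point worth checking is the requirement that $\parents(k)$ be a tuple of \emph{distinct} indices in $\{1,\ldots,k-1\}$: this is automatic in our construction because every elementary node is either a univariate activation (one parent), a binary addition or a binary multiplication (two distinct parents corresponding to freshly created intermediate scalars), or a fixed univariate/bivariate operation from the loss. Hence each $f_i$ admits a representation of the form prescribed by Algorithm~\ref{alg:algof}, which proves the lemma.
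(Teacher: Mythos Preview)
Your proposal is correct. The paper itself offers no proof of this lemma beyond the preceding sentence ``It is obvious to see that'', so your explicit unrolling of the compositional structure of $f_i(w)=l(g(w,x_i),y_i)$ into scalar elementary nodes (binary products and sums for the affine layers, univariate activations coordinatewise, and a final decomposition of the loss) is exactly the kind of elaboration the authors leave implicit, and it respects all the constraints of Algorithm~\ref{alg:algof}, including the distinct-parents requirement.
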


Let us now fix $\sigma_1,\ldots,\sigma_L$ and $l$. Choose a conservative map\footnote{If a unique $\sigma\colon \RR \to \RR$ is applied to each coordinate of each layer, this amounts to consider a conservative field for $\sigma$, for example its Clarke subgradient.} $D_{i}$ for each $\sigma_{i}$, $i = 1 \ldots, L$, and $D_l$ for $l$. An index $i$ being fixed, the backpropagation algorithm applied to $f_i$ is exactly backward auto-differentiation over $f_i$ based on the data of $\{D_{i}\}_{i=1 \ldots L}$ and $D_l$. We denote by $BP_{f_i}$ the output mapping. We have:
	\begin{corollary}[Backpropagation defines a conservative field]\label{l:BP}
					With the above conventions, assume that $l$ and $\sigma_1,\ldots,\sigma_L$ as well as the corresponding conservative maps are definable in the same o-minimal structure, then the mapping $BP_{f_i}$ is a conservative field. As a consequence 
	  $$BP_{f_i}=\nabla f_i $$
	  save on a finite union of manifolds of dimension at most $d-1$.
	  
	  As a consequence, setting \begin{equation}\label{JBP}
BP_{\J}=\frac{1}{n}\sum_{i=1}^n BP_{f_i}\end{equation}
we obtain a conservative field, and thus
\begin{align}
& BP_{\J}=\nabla \J \mbox{ a. e.}\\
& \J(w)-\J(v)=\int_0^1\left\langle BP_{\J}((1-t)v+tw),w-v\right\rangle\dd t,
\end{align}
for any $v,w$ in $\R^p$. 
	\end{corollary}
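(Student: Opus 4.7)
The plan is to chain together results already established earlier in the paper, with almost no new computation. The key observation is that, by the preceding lemma, each $f_i$ admits an algorithmic representation in the form of Algorithm~\ref{alg:algof}: the elementary blocks are $x\mapsto A_j x+b_j$ (smooth), the activations $\sigma_j$, and the outer loss~$l$. The backpropagation algorithm applied to $f_i$, as used in practice, is exactly the reverse mode of nonsmooth automatic differentiation (Algorithm~\ref{alg:autodiff}) fed with the prescribed conservative maps $D_{\sigma_j}$ and $D_l$ (affine maps being smooth, their ``derivatives'' in the sense of Definition~\ref{def:DIjacobian} are their Jacobians). First I would invoke Theorem~\ref{l:autodiff} to conclude that $BP_{f_i}$ is a conservative field for $f_i$.

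Second, I would upgrade this conservativity to an almost-everywhere gradient statement via the Corollary immediately following Theorem~\ref{l:autodiff} (``Automatic differentiation returns the gradient a.e.''). For this I need $f_i$ and $BP_{f_i}$ to be definable in a common o-minimal structure. This is the only place where the definability hypothesis is used and it propagates routinely: by stability of a fixed o-minimal structure under finite composition, Cartesian product, and projection, each $f_i$ is definable (being a finite composition of affine maps, $\sigma_j$, and $l$), and each intermediate operation in Algorithm~\ref{alg:autodiff} preserves definability. Hence the cited corollary yields $BP_{f_i}=\nabla f_i$ off a finite union of smooth manifolds of dimension at most $p-1$, as claimed.

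Third, for $BP_{\J}$ defined by \eqref{JBP}, I would apply the outer sum rule of Corollary~\ref{c:sumRule} to conclude that $BP_{\J}$ is a conservative field for $\J=\tfrac{1}{n}\sum_i f_i$. The almost-everywhere identity $BP_{\J}=\nabla \J$ then follows from Theorem~\ref{t:Rade} (conservative fields are classical gradients almost everywhere), noting that the exceptional set remains a finite union of $(p-1)$-dimensional definable manifolds since the union of the exceptional sets for the individual $BP_{f_i}$ is still of that form. Finally, the integration formula along segments is immediate from Definition~\ref{def:conservativeMapForF} applied to the absolutely continuous (in fact affine) path $t\mapsto (1-t)v+tw$, using equivalence of the min/max/Aumann integrals from Remark~\ref{rem:aumannIntegral}.

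I do not expect any serious obstacle: all ingredients are in place and the proof is essentially a bookkeeping exercise combining Theorems~\ref{t:Rade}, \ref{prop:projImpliesChainRule}, \ref{l:autodiff}, and Corollary~\ref{c:sumRule}. The only delicate point worth double-checking is the definability of the whole backpropagation output as a set valued map; this is where one must be explicit that the componentwise aggregation of Lemma~\ref{lem:DIjacobian} and the composition step of Lemma~\ref{lem:compositionJacobian} do not leave the ambient o-minimal structure, which is true because all the operations involved (matrix product, selection among definable sets, union, convex hull) are definable operations.
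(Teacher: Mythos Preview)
Your proposal is correct and matches the paper's intended argument: the corollary is stated without an explicit proof because it is a direct consequence of Theorem~\ref{l:autodiff}, the ``autodiff returns the gradient a.e.'' corollary, and the outer sum rule (Corollary~\ref{c:sumRule}), exactly as you outline. Your added remark on checking that definability propagates through the backpropagation construction is the one point worth making explicit, and you handle it correctly.
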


	\begin{remark}[Backpropagation and differentiability a.e.]{\em (a) 
	The backpropagation algorithm was popularized in the context of neural networks by \cite{rumelhart1986learning} and is at the heart of virtually the totality of numerical algorithms for training deep learning architectures \cite{lecun2015deep,abadi2016tensorflow,paszke2017workshop}. 
Most importantly, and this was the main motivation for our work, the backpropagation algorithm is used even for network built with non differentiable activation functions  one of the most well known example being ReLU \cite{glorot2011deep}. Using such nondifferentiable functions completely destroys the interpretation of backpropagation algorithm as computing a gradient or even a subgradient. Our results say that, although not computing any kind of known subdifferential, the nonsmooth backpropagation algorithm computes elements of a conservative field. As a consequence, it satisfies the operational chain rule given in Lemma \ref{lem:chainRuleAC}. Note also that virtually all deep network architectures used in applications are actually definable, see e.g., \cite{davis2018stochastic}.\\ 
(b) Despite our efforts we do not see any means  to obtain Corollary~\ref{l:BP} easily. In the ``compositional course of loss differentiation" (recall Algorithm 1 and 3), one can indeed  get trapped in ``nondifferentiability zones" and thus speaking of the derivative of the active layers at this point has no meaning. Thus the smooth chain rule is of no use (see Remark \ref{smoothchainrule}) and the nonsmooth chain rules, for limiting or Clarke subdifferential are simply false in general, see for example \cite{kakade2018provably}. 

To illustrate the fact that nonsmooth zones can be significantly activated during the training phase, we present now a numerical experiment. Let us consider a very simple feed forward architecture composed of $L$ layers of fixed size $p$. Each layer is computed from the previous layer by application of a linear map, from $\RR^p$ to $\RR^p$, composed with a coordinatewise form of ReLU. The input layer is the first element of the canonical basis and we sample the weights matrices with iid uniform entries in $[-1,1]$. We repeat this sampling many times and estimate empirically the probability of computing $\mathrm{ReLU}(0)$ during forward propagation of the network (this would require to use the derivative of ReLU at $0$ during backpropagation).

The results are depicted in Figure \ref{fig:probaZero}. It appears very clearly that for some architectures, with nonvanishing probability, we sample weight matrices resulting in the computation of $\mathrm{ReLU}(0)$. This means that, although the output of the network is piecewise polynomial as a function of weight matrices, and hence almost everywhere differentiable, {\em we still need to evaluate intermediate functions at points where they are not differentiable with non zero probability}. Hence, as we already mentioned, one cannot assert  that the fact that the output is differentiable almost everywhere implies that the classical chain rule of differentiation applies almost everywhere; this assertion is just false. These empirical results also confirm the interest of working with piecewise smooth objects through stratification techniques.

\begin{figure}
    \centering
    \includegraphics[width=.8\textwidth]{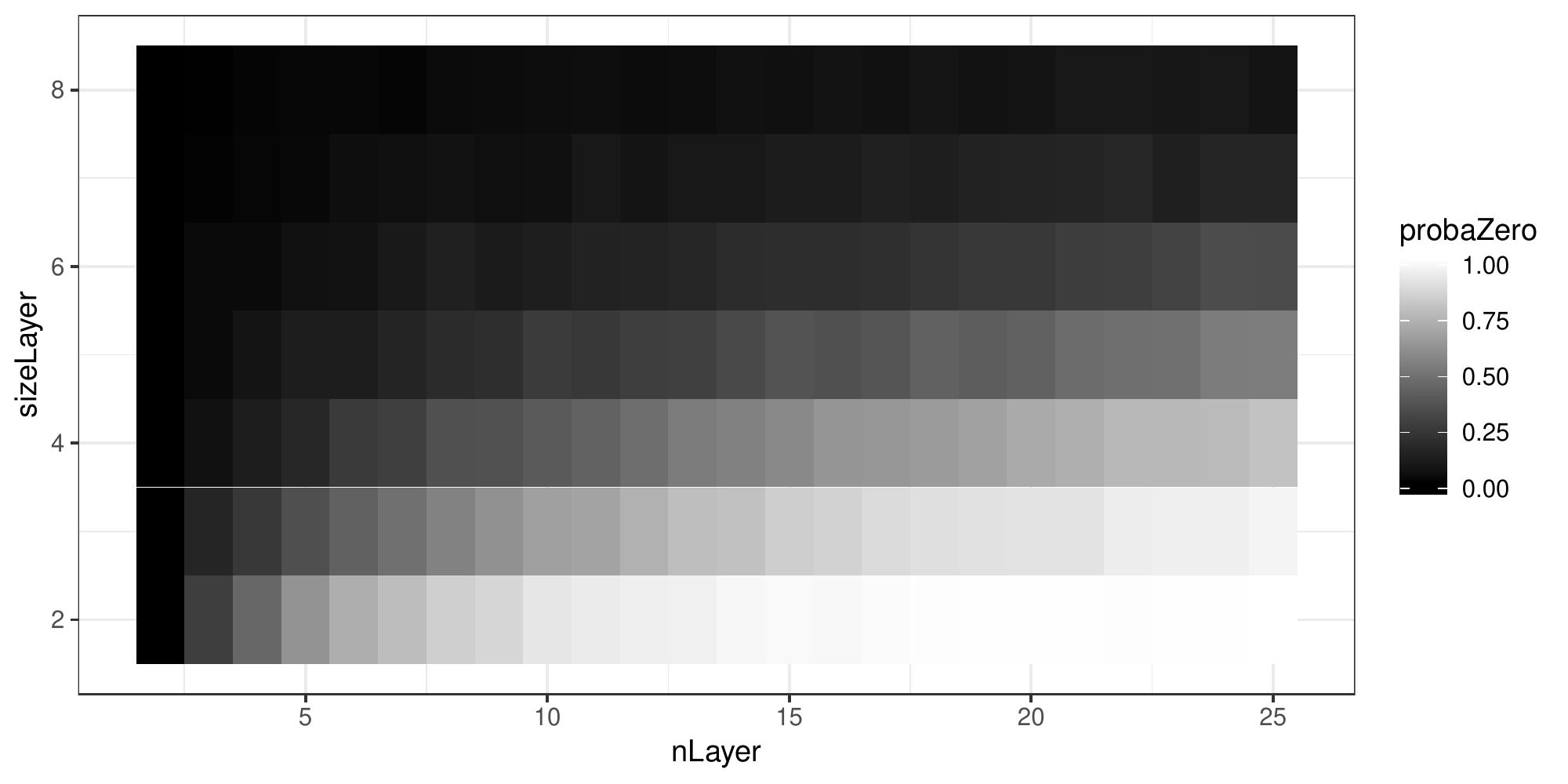}
    \caption{\small Estimation of the probability of applying ReLU to $0$, as a function of the size and number of layers in a feedfoward network. The input is set to the first element of the canonical basis and we then propagate application ReLU layers with linear functions. The weights of the linear term are sampled uniformly at random between -1 and 1.}
    \label{fig:probaZero}
\end{figure}

}\end{remark}

\subsection{Training nonsmooth neural networks with nonsmooth SGD}

To our knowledge the following result is the first genuine analysis of {\em nonsmooth} SGD for deep learning taking into account the real nature of backpropagation and the use of mini-batches. Note that the steady states below, $BP_\J$-critical points (see \eqref{JBP}), are the {\em actual steady states} of the corresponding dynamics, which is generally ignored in the literature. For simplicity of reading, we consider the special case of ReLU networks with squared loss.

\begin{corollary}[Convergence of SGD for Deep Learning] Consider a feed for\-ward neu\-ral net\-work with mean squared error and {\rm ReLU} activation function. Then the bounded se\-quences generated by the mini-batch SGD algorithm  using the backpropagation oracle approach the $BP_\J$-critical set of the loss function with probability one.  
\end{corollary}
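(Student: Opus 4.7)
The plan is to reduce the statement to Theorem~\ref{t:sgd} by checking that every hypothesis listed there is met for a ReLU feedforward network with squared loss.

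First, I would check definability and conservativity ingredient by ingredient. The squared loss $l(a,b) = \tfrac12 \|a-b\|^2$ is polynomial, the affine layers $A_j$ are affine, and the ReLU nonlinearity $t \mapsto \max\{0,t\}$ is piecewise linear; all three are semialgebraic, hence definable in a common o-minimal structure. For the nonsmooth ReLU factors one picks the Clarke subgradient (a semialgebraic, graph-closed, compact convex valued conservative selection, with $\partialc \mathrm{ReLU}(0)=[0,1]$), and the usual gradient for the smooth layers and the loss. With these choices the backpropagation algorithm applied to $f_i(w) = l(g(w,x_i), y_i)$ is, by construction, the reverse-mode automatic differentiation of Algorithm~\ref{alg:autodiff} driven by these data. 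Corollary~\ref{l:BP} therefore yields that $BP_{f_i}$ is a definable conservative field for the locally Lipschitz, definable function $f_i$. By Corollary~\ref{c:sumRule} (sum rule by enlargement) and the definition \eqref{JBP}, $BP_{\J}=\tfrac{1}{n}\sum_i BP_{f_i}$ is then a definable conservative field for $\J$.

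Second, I would recast the deep-learning SGD in the mini-batch form required by Theorem~\ref{t:sgd}. By construction each iterate satisfies $w_{k+1} = w_k - \alpha_k d_k$ with $d_k \in \tfrac{1}{|B_k|}\sum_{i \in B_k} BP_{f_i}(w_k)$, which is exactly \eqref{eq:miniBatch} with the family $(D_i)_{i=1}^n = (BP_{f_i})_{i=1}^n$. Taking the conditional expectation over the uniformly drawn mini-batches yields the mean field $\tfrac{1}{n}\sum_i BP_{f_i}(w_k)\subset BP_{\J}(w_k)$, so on the event $\BB = \{\sup_k\|w_k\|<\infty\}$ the noise $d_k$ minus this mean field stays bounded, as demanded in the proof of Theorem~\ref{t:sgd}.

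Third, I would invoke Theorem~\ref{t:sgd} verbatim. Its step-size assumption $\alpha_k = o(1/\log k)$ is part of the standing hypotheses for the deep-learning SGD, and its requirement that the bounded-trajectory event have positive probability is exactly the content of the word \emph{bounded} in the corollary's statement. The conclusion delivers, almost surely on $\BB$, that the accumulation-point set is nonempty, contained in $\crit_{\J}=\{w:\,0\in BP_{\J}(w)\}$, and that $\J$ is constant there; this is the $BP_{\J}$-critical set of the loss. The only real subtlety, and the step I would check most carefully, is the identification of the backpropagation selection at kinks with an admissible measurable selection of the conservative field $BP_{f_i}$ produced by Corollary~\ref{l:BP}; once this bookkeeping is done, everything else is a direct application of the machinery already built.
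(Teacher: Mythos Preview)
Your proposal is correct and follows exactly the paper's approach: the paper's own proof is a one-line invocation of Theorem~\ref{t:sgd} after noting that the squared norm and $\mathrm{ReLU}$ are semialgebraic, and you have simply unpacked that invocation by checking each hypothesis (definability of the elementary pieces, conservativity of $BP_{f_i}$ via Corollary~\ref{l:BP}, and the mini-batch recursion matching \eqref{eq:miniBatch}). Your closing worry about identifying the backpropagation output with a selection of $BP_{f_i}$ is already absorbed by the definition of the backward automatic differentiation field in Theorem~\ref{l:autodiff}, so no extra bookkeeping is needed.
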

This is a direct consequence of Theorem \ref{t:sgd} since the squared norm and $\mathrm{ReLU}$ are semialgebraic. The same result holds with any functions $\sigma_1, \ldots, \sigma_L$ and $l$ definable in the same structure. As mentioned previously more complex architectures are accessible since our results rely only on abstract automatic differentiation and definability.

\section{Conclusion} We introduced new tools for {\em nonsmooth} nonconvex problems, based on the idea that the choice of a fixed notion of subdifferential right from the start can be extremely limiting in terms of analysis and even of representation (e.g., automatic differentiation).

Our approach eventually consists in the following protocol. 

Consider an optimization problem involving an automatic differentiation oracle --we focused on the example of deep learning, but other application fields are possible (numerical simulations, optimal control solvers or partial differential equations \cite{mohammadi2010applied,corliss2002automatic}).

\begin{itemize}

\item[--] {\bf ``Choose your optimization method and then choose your subdifferential".} Evaluate precisely your decomposition requirements, in terms of sum or product, e.g., mini-batches for SGD.
Infer from the decomposition method and the use of nonsmooth automatic differentiation a conservative field  matched to the considered  algorithm, e.g., coming back to SGD,  set $\displaystyle D_f=\frac1n\sum_{i=1}^n D_{f_i}$.

\item[--]{\bf ``Verify definability or tameness assumption".} Check that the various objects are definable in some common adequate structure. The problems we met are covered by one of the following, by order of frequency:  semialgebraicity, global subanalyticity or log-exp structures. 

\item[--] {\bf ``Identify Lyapunov/dissipative properties".} Use a Lyapunov approach, e.g.,  \`a la Bena\"im-Hofbauer-Sorin,  to conclude that the algorithm under consideration has dissipative properties and thus fine asymptotic properties.
\end{itemize}

To feel the generality of this protocol one can for instance consider mini-batch stochastic approximation strategies based on discretization of standard continuous time dynamical systems with known Lyapunov stability. Prominent examples include the heavy ball momentum method \cite{attouch2000heavy} and ADAM algorithm \cite{barakat2018convergence}, commonly proposed in deep learning libraries, as well as INDIAN introduced and studied in \cite{castera2019inertial}.

\bigskip

\noindent
{\bf Acknowledgements.} The authors acknowledge the support of AI Interdisciplinary Institute ANITI funding, through the French ``Investing for the Future -- PIA3'' program
under the Grant agreementi ANR-19-PI3A-0004, Air Force Office of Scientific Research, Air Force Material Command, USAF, under grant numbers FA9550-19-1-7026, FA9550-18-1-0226, and ANR MasDol. J. Bolte acknowledges the support of ANR Chess, grant ANR-17-EURE-0010 and ANR OMS.

The authors would like to thank Lionel Thibault,  Sylvain Sorin, an anonymous referee for their careful readings of an early version of this work; we also thank Gersende Fort for her very valuable comments and discussions on stochastic approximation.

\end{document}